\newcommand{\letitre}{A Riemann--Hilbert correspondence for infinity local systems}
\newcommand{\A}{\mathcal{A}}
\newcommand{\E}{\mathbb{E}}
\newcommand{\R}{\mathbb{R}}
\newcommand{\Z}{\mathbb{Z}}
\newcommand{\Reps}{\mathrm{Reps}}
\newcommand{\Hom}{\mathrm{Hom}}
\newcommand{\End}{\mathrm{End}}
\newcommand{\Sing}{\mathrm{Sing}}
\newcommand{\Flat}{\mathrm{Flat}}
\newcommand{\Ob}{\mathcal{O}b\,}
\newcommand{\ev}{\mathrm{ev}}
\newcommand{\mto}[1]{\stackrel{#1}{\longrightarrow}}
\newcommand{\As}{\text{\bfseries\sf{A}}}
\newcommand{\Perf}{\mathcal{P_{\As}}}
\newcommand{\Pinf}{{\pi_{\infty}}M}
\DeclareMathOperator{\Ho}{Ho}
\newcommand{\Cal}{\mathcal{C}}
\newcommand{\Loc}{\mathsf{Loc}_{\infty}^{\Cal}(K)}
\newcommand{\ALoc}{\mathsf{Loc}_{A_{\infty}}(K)}
\newcommand{\RH}{\mathcal{RH}}
\DeclareMathOperator{\I}{Im}
\newtheorem{thm}{Theorem}[section]
\newtheorem{Defn}[thm]{Definition}
\newtheorem{Prop}[thm]{Proposition}
\newtheorem{Thm}[thm]{Theorem}
\newtheorem{Rmk}[thm]{Remark}
\newtheorem{Cor}[thm]{Corollary}
\newtheorem{Lem}[thm]{Lemma}
\newtheorem{Example}[thm]{Example}
\newcommand{\sSet}{s\mathcal{S}et}
\numberwithin{equation}{section}
\begin{document}

\title{A Riemann--Hilbert correspondence for infinity local systems}

\author{Jonathan Block and Aaron M. Smith}




\maketitle
\begin{abstract}
We describe an $A_\infty$-quasi-equivalence of dg-categories between the first authors' $\mathcal{P}_{\mathcal{A}}$ ---the category of category of prefect $A^0$-modules with flat $\Z$-connection, corresponding to the de Rham dga $\mathcal{A}$ of a compact manifold $M$--- and the dg-category of \emph{infinity-local systems} on $M$ ---homotopy coherent representations of the smooth singular simplicial set of $M$, $\Pinf$. We understand this as a generalization of the Riemann--Hilbert correspondence to $\Z$-connections ($\Z$-graded superconnections in some circles).  In one formulation an infinity-local system is simplicial map between the simplicial sets ${\pi}_{\infty}M$ and a repackaging of the dg-category of cochain complexes by virtue of the simplicial nerve and Dold-Kan.  This theory makes crucial use of Igusa's notion of higher holonomy transport for $\Z$-connections which is a derivative of Chen's main idea of generalized holonomy.
\end{abstract}

\section{Introduction} \label{sec:1}
Given a compact manifold $M$,  the classical Riemann--Hilbert correspondence gives an equivalence of categories between $\Reps(\pi_1(M))$ and the category\newline $\Flat(M)$ of vector bundles with flat connection on $M$.  While beautiful, this correspondence has the primary drawback that it concerns the truncated object $\pi_1$, which in most cases contains only a small part of the data which comprises the homotopy type of $M$.  From the perspective of (smooth) homotopy theory  the manifold $M$ can be replaced by its infinity-groupoid $\pi_{\infty}M := \Sing_{\bullet}^{\infty}M $ of smooth simplices.  Considering the correct notion of a representation of this object will allow us to produce an untruncated Riemann--Hilbert theory.  More specifically, we define an \emph{infinity-local system} to be a map of simplicial sets which to each simplex of $\pi_{\infty}M$ assigns a homotopy coherence in the category of chain complexes over $\R, \Cal := \mathrm{Ch}(\R)$.  Our main theorem is an $A_\infty$-quasi-
equivalence
\begin{equation*}
 \RH \colon  \Perf \rightarrow \mathsf{Loc}^{\Cal}(\pi_{\infty}M),
\end{equation*}
in which $\Perf$ is the dg-category of graded bundles on $M$ with flat $\Z$-graded connection, and $\mathsf{Loc}^{\Cal}(\pi_{\infty}M)$ is the dg-category of infinity-local systems on $M$.

In the classical Riemann--Hilbert equivalence, the map
\begin{equation*}
 \Flat(M) \rightarrow \Reps(\pi_1(M)),
\end{equation*}
is developed by calculating the holonomy of a flat connection.  The holonomy descends to a representation of $\pi_1(M)$ as a result of the flatness.  The other direction, 
\begin{equation*}
 \Reps(\pi_1(M)) \rightarrow \Flat(M),
\end{equation*}
is achieved by the associated bundle construction.

In the first case our correspondence proceeds analogously by a calculation of the holonomy of a flat $\Z$-graded connection.  The technology of iterated integrals suggests a precise and rather natural notion of such holonomy.  Given a vector bundle $V$ over $M$ with connection, the usual parallel transport can be understood as a form of degree $0$ on the path space  $PM$  taking values in the bundle $\Hom(\ev_1^*V,\ev_0^*V)$.  The higher holonomy is then a string of forms of total degree $0$ on the path space of $M$ taking values in the same bundle.  Such a form can be integrated over cycles in  $PM$ , and the flatness of the connection implies that such a pairing induces an infinity-local system as desired.  This is the functor
\begin{equation*}
 \RH\colon  \Perf \rightarrow \mathsf{Loc}^{\Cal}(\pi_{\infty}M).
\end{equation*}

It would be an interesting problem in its own right to define an inverse functor which makes use of a kind of associated bundle construction.  However we chose instead to prove quasi-essential surjectivity of $\RH$.  Given an infinity-local system $(F,f)$ one can form a  complex of sheaves over $X$ by considering the sheaf $\mathsf{Loc}^{\Cal}(\pi_{\infty}U)(\R,F)$.  This complex is quasi-isomorphic to the sheaf obtained by extending by the sheaf of $C^{\infty}$ functions and then tensoring with the de Rham sheaf.  Making use of a theorem of Illusie, we construct from this data a perfect complex of $\A^0$-modules quasi-isomorphic to the zero-component of the connection in $\RH(F)$.  Finally we follow an argument of \cite{MR2648899} to construct an element of $\Perf$ which is quasi isomorphic to $\RH(F)$.


\section{Infinity-Local Systems}\label{sec:2}
\subsection{The Definition of an Infinity-Local System}\label{sec:2.1}
Now we develop a higher version of a local system.  These objects will be almost the same as the $A_{\infty}$-functors of Igusa in \cite{MR2846735}, but tailored to suit our equivalence result.  We want to emphasize the analogy with classical local systems.  Let $\mathcal{C}$ be a dg-category over $k$ a characteristic $0$ field (which we are implicitly regarding as $\R$ in this paper), and $K$ a simplicial set.  Fix a map $F \colon  K_0 \rightarrow \Ob \, \mathcal{C}$.  Then define:
\begin{equation*}
 \mathcal{C}_{F}^{i,j} := \lbrace \mbox{maps } f\colon  K_i \rightarrow \mathcal{C}^j \vert\,\, f(\sigma)\in \mathcal{C}^j(F(\sigma_{(i)}),F(\sigma_{(0)}))\rbrace,
\end{equation*}
and,
\begin{equation*}
 \mathcal{C}^k_{F}(K) := \underset{i+j=k,i\geq 0}{\oplus}\mathcal{C}_{F}^{i,j}.
\end{equation*}

There are some obvious gradings to keep track of.  For $f \in \mathcal{C}_{F}^{p,q}$ define
\begin{equation*}
T(f) := (-1)^{\lvert f \rvert}f := (-1)^{p+q}f,  \, \,\,\,
K(f) := (-1)^{q}f,\, \,\,\, J(f) := (-1)^{p}f.
\end{equation*}
With respect to the simplicial degree in $ \mathcal{C}^k_{F}(K)$, we write
\begin{equation*}
 f = f^1 + f^2+\dotsc, \,\,\,\,\,\,f^i \in \mathcal{C}_{F}^{i,\bullet}.
\end{equation*}  

\vspace{.1in}\noindent We define some operations on these maps: 
\begin{align*}
 (df^i)(\sigma_i) &:= d(f^i(\sigma_i))\\
 (\delta f^i)(\sigma_{i+1}) &:= \sum_{l=1}^{i}(-1)^{l} f^i(\partial_l (\sigma))\\ 
 \hat{\delta} &:= \delta \circ T(\bullet)
\end{align*}
and for $g^p \in \mathcal{C}_{F}^{p,q}$,
\begin{equation*}
 (f^i \cup g^p)(\sigma \in K_{i+p}) := (-1)^{i(p+q)}f^i(\sigma_{(0 \dotsc i)})g^p(\sigma_{(i \dotsc p+i)}).
\end{equation*}
These operations can be extended by linearity to sums in $\oplus \mathcal{C}^s(K)$.  The cup product is defined as the sum of the cups across all internal pairs of faces,
\begin{equation*}
 (f \cup g)(\sigma_k) := \sum_{t=1}^{k-1}(-1)^{t\lvert g^{k-t} \rvert}f^t(\sigma_{(0 \dotsc t)})g^{k-t}(\sigma_{(t \dotsc k)}).
\end{equation*}
We could suggestively write $f \cup g:=  \mu \circ (f \otimes g) \circ \Delta$ in which $\mu$ is the composition $\mathcal{C}^{\bullet} \otimes \mathcal{C}^{\bullet} \rightarrow \mathcal{C}^{\bullet}$, and $\Delta$ is the usual comultiplication which splits a simplex into a sum over all possible splittings into two faces,
\begin{equation*}
\Delta(\sigma_k) := \sum_{p+q=k,p,q \geq 1} \sigma_p\otimes \sigma_q.
\end{equation*}
However, strictly speaking there is no $\Delta$ operator on a general simplicial set because one doesn't have a linear structure.  The sign above appears because an $i$-simplex passes an element of total degree $p+q$ ---consistent with the Koszul conventions. 

\begin{Defn}
 A pair $(F,f)$ with $f \in \mathcal{C}_{F}^1(K)$  such that $0 = \hat{\delta} f + df + f \cup f$ is called an infinity-local system.  The set of infinity-local systems valued in $\Cal$ is denoted $\mathsf{Loc}_{\infty}^{\Cal}(K)$  
\end{Defn}
We will often denote an infinity-local system $(F,f)$ by just $F$ if no confusion will arise.


\begin{Example} If $F$ denotes an ordinary local system, then it naturally defines an infinity-local system.
\end{Example}
\begin{proof}
 Exercise.
\end{proof}

\begin{Rmk}
In the category of cochain complexes, $\Cal$, the differentials on arbitrary hom-complexes will be given by graded commutation with the family of differentials  $d_x \in \Cal^1(x,x)$ i.e.,
\begin{equation*}
 df(\sigma_k) = d_{F(\sigma_{(0)})}\circ f_k(\sigma_{(0 \dotsc k)}) - (-1)^{\lvert f_k \rvert}f_k(\sigma_{(0\dotsc k)})\circ d_{F(\sigma_{(k)})}.
\end{equation*}
\end{Rmk}

\subsection{Infinity-Local Systems as Simplicial Maps}\label{sec:2.2}
We can give an alternate, more concise description of an infinity-local system as a certain map of simplicial sets.  In service of this redescription we introduce the notion of the simplicial set of homotopy coherent simplices in a dg-category.   
\begin{Defn}\label{Cinfty}
Given a dg-category $\Cal$, the simplicial set $\Cal_{\infty}$ of homotopy-coherent simplices in $\Cal$ is constructed as follows.

Denote by $Y_i([n])$ the set of length-$i$ (ordered) subsets of $[n]$.  We denote an element of $Y_j([n])$ as an ordered tuple $(i_0 < i_1 < \dotsc < i_{j-1})$ and make use of this notation below.
\begin{flalign*}
    \Cal_{\infty \,0} :=\,& \lbrace P = (p,P_0) \vert \\
		  & \quad p \colon  Y_1([0]) \rightarrow \Ob\Cal, \,\, P_0 =d_p \colon  Y_1([0]) \rightarrow \Cal^{1}(p(i_0),p(i_0))\rbrace \\
		  & \\
  \Cal_{\infty \, 1} := \,& \lbrace P = (p,P_0 + P_1) \vert\\
		  & \quad p \colon  Y_1([1]) \rightarrow \Ob\Cal, \,\, P_0= d_p \colon  Y_1([1]) \rightarrow \Cal^{1}(p(i_0),p(i_0)),\\
                  & \quad P_1\colon  Y_2([1]) \rightarrow \mathcal{C}^0(p(i_1),p(i_0)), \\
		  & \text{such that,}\\
		  & \quad dP_1 = 0 \rbrace\\
  & \\
  & \dotsc \\
  \Cal_{\infty \, l}  := \,& \lbrace P = (p,\sum_{i=0}^l P_i) \vert\\
                 & \quad p\colon  Y_1([l]) \rightarrow \Ob\Cal, \,\, P_0 = d_p \colon  Y_1([l]) \rightarrow \Cal^{1}(p(i_0),p(i_0)),\\
                 & \quad P_1 \colon  Y_{2}([l]) \rightarrow  \mathcal{C}^{0}(p(i_1),p(i_0)),\\
		  & \dotsc\\
                 & \quad P_j \colon  Y_{j+1}([l]) \rightarrow  \mathcal{C}^{1-j}(p(i_j),p(i_0)),\\
                 & \dotsc\\
                 & \quad P_l \colon  Y_{l+1}([l]) \rightarrow \mathcal{C}^{1-l}(p(i_l),p(i_0)), \\
		 & \text{such that,}\\
		 & \quad dP + \hat{\delta} P + P \cup P = 0 \rbrace.
\end{flalign*}
The equation $dP + \hat{\delta} P + P \cup P = 0$ above can be parsed according to the following three definitions:
\begin{align*}
 dP(i_1 < \dotsc < i_j) &:= d(P(i_1 < \dotsc < i_j))\\
 \hat{\delta} P_j(i_0 < \dotsc < i_{j+1}) &:= -\sum_{q=1}^{k-1} (-1)^q P_j(i_0 < \dotsc <  \hat{i_q} < \dotsc < i_{j+1})\\
 (P \cup P)_j(i_0 < \dotsc < i_{j}) &:= \sum_{q=1}^{j-1} (-1)^{q}P_q(i_0 < \dotsc < i_q)\circ P_{j-q}(i_q < \dotsc < i_{j}).
\end{align*}
The face and degeneracy maps for this simplicial set are defined as follows.
\begin{align*}
 \partial_q P_j(i_0 < \dotsc < i_{j+1}) &:= P_j(i_0 < \dotsc < \hat{i_q} < \dotsc < i_{j+1}),\\
 s_q P_j(i_0 < \dotsc < i_{j-1}) & := P_j(i_0 < \dotsc < i_q < i_q < \dotsc < i_{j-1}).
\end{align*}\\
\end{Defn}

\begin{Rmk}
This explicit definition of $\Cal_{\infty}$ can be compressed to the definition  
\begin{equation}
 \Cal_{\infty} :=  N \circ S \circ \tilde{\Gamma} \circ T[\Cal].
\end{equation}
Here $T$ is the functor which truncates the hom-complexes of a dg-category to connective complexes by taking homology at the $0$th grading.  $\tilde{\Gamma}$ is the functor produced by application of the Dold-Kan equivalence to hom-complexes of a connective dg-category.  $S$ is the forgetful functor mapping a category enriched in simplicial vector spaces to a category enriched in simplicial sets.  Finally, $N$ is the simplicial nerve due to Cordier \cite{MR648798} and described in \cite{MR2522659}.
\end{Rmk}

Now it is possible to give a more concise description of an infinity-local system as a map of simplicial sets.
\begin{Defn}(alternate)
 An infinity-local system on $K$ valued in $\Cal$ is an element of $\sSet(K,\Cal_\infty)$, i.e. the set of $\sSet$-maps from $K$ to $\Cal_\infty$.  
\end{Defn}
This redescription of an infinity-local system makes it clear that the functor 
\begin{equation*}
K \mapsto \Ob \Loc
\end{equation*}
is represented by $\Cal_\infty$.  

Going even further, this perspective inspires a potential definition of the category (or space) of such objects as a mapping space in the enriched setting, but this story will be left for another exploration.  In the next section we describe the category of infinity-local systems explicitly as a dg-category.

\subsection{The dg-category of Infinity-Local Systems}\label{sec:2.3}
\begin{Defn}
 We denote by $\mathsf{Loc}_{\infty}^\Cal(K)$, the category of infinity-local systems.  The objects are infinity-local systems on the simplicial set $K$ valued in the dg-category $\Cal$.  We define a complex of morphisms between two infinity-local systems  $F$,$\,G$:
\begin{equation*}
  \mathsf{Loc}_{\infty}^{\Cal}(K)(F,G) := \underset{i+j=k}{\oplus} \lbrace \phi\colon  K_i \rightarrow \Cal^j \vert \phi(\sigma) \in \Cal^j(F(\sigma_{(i)}),G(\sigma_{(0)})) \rbrace,
\end{equation*}
with a differential D,
\begin{equation*}
 D\phi := \hat{\delta}\phi +d\phi + G \cup \phi - (-1)^{\lvert \phi \rvert}\phi \cup F.
\end{equation*}
In the above, $\phi = \phi^0 + \phi^1 +\dotsc$ is of total degree $\lvert \phi \rvert =p$, and
\begin{align*}
 (\hat{\delta} \phi)(\sigma_k) &:= \delta \circ T = \sum_{j=1}^{k-1}(-1)^{j+\lvert \phi \rvert}\phi^{k-1}(\partial_j(\sigma_k))\\
 d\phi(\sigma_k) &:= d_{G(\sigma_{(0)})} \circ \phi(\sigma_{(0 \dotsc k)}) - \phi(\sigma_{(0 \dotsc k)})\circ d_{F(\sigma_{(k)})}. 
\end{align*}  
\end{Defn}

\begin{Prop}
 $\mathsf{Loc}_{\infty}^{\Cal}(K)$ is a dg-category.
\end{Prop}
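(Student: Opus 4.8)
The plan is to verify the three defining axioms of a dg-category directly from the explicit formula for the morphism complexes $\mathsf{Loc}_{\infty}^{k,\Cal}(K)(F,G)$: that each such complex is a cochain complex (i.e.\ we have a differential squaring to zero), that composition is an associative cochain map, and that identity morphisms exist. First I would write down the candidate differential on $\mathsf{Loc}_{\infty}^{\Cal}(K)(F,G)$ — it must be the ``twisted'' differential built out of the three pieces $d$, $\hat\delta$, and the cup products $f_F\cup(-)$ and $(-)\cup f_G$ coming from the Maurer–Cartan elements $f_F, f_G$ attached to the objects $F,G$; explicitly $D\phi := d\phi + \hat\delta\phi + f_F\cup\phi - (-1)^{|\phi|}\phi\cup f_G$ (signs to be fixed so that degrees work, using the operators $T,K,J$). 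The candidate composition of $\phi\in\mathsf{Loc}_{\infty}^{\Cal}(K)(F,G)$ and $\psi\in\mathsf{Loc}_{\infty}^{\Cal}(K)(G,H)$ is again the cup product $\psi\cup\phi$ (reading right to left, matching the convention that $\phi(\sigma)\in\Cal^j(F(\sigma_{(i)}),G(\sigma_{(0)}))$), and the identity of $F$ is the element of simplicial degree $0$ sending a vertex $x$ to $\mathrm{id}_{F(x)}\in\Cal^0(F(x),F(x))$ and all higher simplices to $0$.

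Next I would establish the bookkeeping lemmas that make everything go: (i) $d^2=0$ on $\Cal$, $\delta^2=0$ and hence $\hat\delta^2=0$, and $d$ and $\hat\delta$ anticommute (up to the sign conventions built into $T$); (ii) the cup product is associative with the stated signs, which reduces to coassociativity of $\Delta$; (iii) the Leibniz-type identities $d(f\cup g)=df\cup g\pm f\cup dg$ and $\hat\delta(f\cup g)=\hat\delta f\cup g\pm f\cup\hat\delta g$ — these are the standard Alexander–Whitney/cochain identities and are where the signs in the definition of $\cup$, $T$, $K$, $J$ were chosen to make things work. Granting these, $D^2\phi$ expands into a sum of terms that cancel in pairs: the $d^2$, $\hat\delta^2$, and $d\hat\delta+\hat\delta d$ terms vanish by (i), the mixed terms involving one $f$ cancel against the Leibniz terms from (iii), and the remaining $f_F\cup f_F\cup\phi$ and $\phi\cup f_G\cup f_G$ terms are rewritten using the Maurer–Cartan equation $\hat\delta f + df + f\cup f = 0$ for $f_F$ and $f_G$ respectively, so that $D^2=0$ precisely because $(F,f_F)$ and $(G,f_G)$ are $\infty$-local systems. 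The same Leibniz identities show $D$ is a derivation for $\cup$, i.e.\ composition is a cochain map, and associativity of composition is just (ii); that the identity element is a two-sided unit and is $D$-closed is immediate from the formulas since its only nonzero component is in simplicial degree $0$ and the $\hat\delta$, $\delta$ of a degree-$0$ cochain vanishes.

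The main obstacle, and the only genuinely delicate point, is sign management: one must check that all the signs introduced by $T$, $K$, $J$, by the $(-1)^{i(p+q)}$ in the cup product, and by the $(-1)^l$ in $\delta$, conspire so that (a) $D$ genuinely raises total degree by $1$ and squares to zero, and (b) $D$ is a graded derivation for $\cup$ with the correct Koszul sign $(-1)^{|\psi|}$. I would handle this by fixing once and for all the convention that the total degree $|f|$ of $f\in\Cal_F^{p,q}$ is $p+q$ (as recorded by $T$), checking the Leibniz and anticommutation identities on a single internal face decomposition (since the general case is a linear combination of these), and then verifying $D^2=0$ by collecting terms according to how many factors of $f_F$ resp.\ $f_G$ they contain. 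Everything else — that the complexes are graded by $k=i+j$ with $i\ge 0$, that composition respects this grading, and that units behave — is then routine. This argument is essentially the one showing that, for a dg-algebra $A$ with Maurer–Cartan element, the twisted complex $A_\tau$ is again a dg-algebra; here $A$ is the (bi)simplicial cochain algebra $\bigoplus_{i,j}\mathcal{C}^{i,j}$ and the twist is by the pair of $\infty$-local-system data on source and target.
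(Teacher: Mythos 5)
Your overall strategy — write down the twisted differential $D$ on $\mathsf{Loc}_{\infty}^{\Cal}(K)(F,G)$, verify $D^2=0$ by combining Leibniz rules for $d$, $\hat\delta$, $\cup$ with the Maurer--Cartan equations for $F$ and $G$, and then check that cup composition, associativity, and units fit together — is the same argument the paper gives, only more thoroughly spelled out (the paper's proof is quite terse: it records $D$, quotes the $\hat\delta$-Leibniz rule for the graded commutator, the graded Jacobi identity, and the Maurer--Cartan equations, and asserts $D^2=0$ without addressing composition or units explicitly).

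There is, however, one concrete error that would derail the verification as written: your explicit formula for the differential has source and target interchanged. You wrote $D\phi := d\phi + \hat\delta\phi + f_F\cup\phi - (-1)^{|\phi|}\phi\cup f_G$, whereas it must be $D\phi = \hat\delta\phi + d\phi + f_G\cup\phi - (-1)^{|\phi|}\phi\cup f_F$ (as in the paper, where $F$ and $G$ in the formula stand for their Maurer--Cartan elements). This is not a sign to be fixed later but a typing constraint. Since $\phi(\sigma)\in\Cal(F(\sigma_{(i)}),G(\sigma_{(0)}))$ and the cup product evaluates the left factor on the front face and the right factor on the back face, the term $f_F^t(\sigma_{(0\dots t)})\circ\phi^{k-t}(\sigma_{(t\dots k)})$ requires composing a map with target $G(\sigma_{(t)})$ into a map with source $F(\sigma_{(t)})$, which is undefined unless $F=G$; only cupping $f_G$ on the left and $f_F$ on the right produces well-formed compositions. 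Relatedly, you should be careful that the paper's $\cup$ (used in $D$ and in the Maurer--Cartan equation) is the \emph{restricted} cup product summing over $1\le t\le k-1$, with the $t=0,k$ terms absorbed into the $d$-operator; the composition of morphisms and the unit axiom require the unrestricted sum $0\le t\le k$. You use a single symbol $\cup$ for both, which as stated makes $\mathrm{id}_F\cup\phi=0$; distinguishing the two products (or equivalently noting that the unit's simplicial-degree-$0$ component enters through the $t=0,k$ boundary terms) is needed to make the unit argument go through. With those two corrections, the rest of your outline is sound.
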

\begin{proof}
$D^2=0$ follows from the two observations,
\begin{align*}
 i)&\,\,\, \hat{\delta}[F, G] = [\hat{\delta} F, G] + (-1)^{\lvert F \rvert}[F,\hat{\delta} G],\\
 ii)&\,\,\, [F,[G,H]] = [[F,G],H]+ (-1)^{\lvert F \rvert \lvert G \rvert}[G,[F,H]],
\end{align*}
---in which $[\,,]$ is the graded commutator $[A,B] = A \cup B + (-1)^{\lvert A \rvert \lvert B \rvert}B \cup A$--- and the fact $F$ and $G$ are infinity-local systems:
\begin{equation*}
 \hat{\delta} F +dF + F \cup F = \hat{\delta} G + dG + G \cup G = 0.
\end{equation*}
\end{proof}

We can define a shift functor in $\Loc$ as well as a cone construction.  
\begin{Defn}\label{ShiftAndCone}
Given $F \in \Loc$, define $F[q]$ via 
\begin{equation*}
F[q](x\in K_0) := F(x)[q], \,\,\ \text{and} \,\,\, F[q](\sigma_k) := (-1)^{q(k-1)}F(\sigma_k).
\end{equation*}
For a morphism,
\begin{equation*}
  \phi[q](\sigma_k) := (-1)^{qk}\phi.
\end{equation*}
Given a morphism $\phi \in \Loc(F,G)$ of total degree $q$, define the map
\begin{equation*}
 C(\phi)\colon  K_0 \rightarrow \Ob \mathcal{C}\,\,\,\, \text{by} \,\,\,\, x \mapsto F[1-q](x)\oplus G(x). 
\end{equation*}
Define the element $c(\phi)$ of $\mathcal{C}^1_{C(\phi)}$ via
\begin{equation*}
c(\phi) := \begin{pmatrix}
 f[1-q]  & 0 \\
 \phi[1-q] & g
\end{pmatrix}.
\end{equation*}
\end{Defn}
Unless $\phi$ is closed, this cone will not be an element of $\Loc$. This useful construction will appear in our calculations later both when $\phi$ is closed and otherwise.

\begin{Defn}
 A degree $0$ closed morphism $\phi$ between two infinity-local systems $F,\,G$
over $K$ is a homotopy equivalence if it induces an isomorphism in $\Ho\mathsf{Loc}_{\infty}^{\Cal}(K)$.
\end{Defn}
We want to give a simple criterion for $\phi$ to define such a homotopy equivalence.  On the complex $\mathsf{Loc}_{\infty}^{\Cal}(K)^\bullet (F,G)$
define a decreasing filtration by
\[ F^k \mathsf{Loc}_{\infty}^{\Cal}(K)^\bullet(F,G) =\{\phi\in
 \mathsf{Loc}_{\infty}^{\Cal}(K)^\bullet(F,G) )| \,\,\,\phi^i=0 \mbox{ for }
i<k\}.\] 
\begin{Prop}\label{SS} There is a spectral sequence
\begin{equation*} E_0^{pq}\Rightarrow
H^{p+q}( \mathsf{Loc}_{\infty}^{\Cal}(K)^\bullet(F,G)), 
\end{equation*}
in which
\[ E_0^{pq}= \mbox{gr } (\mathsf{Loc}_{\infty}^{\Cal}(K)^\bullet(F,G) )= \lbrace \phi\colon  K_p \rightarrow \Cal^q \vert \phi(\sigma) \in \Cal^q(F(\sigma_{(i)}),G(\sigma_{(0)})) \rbrace,
\] with differential
\[d_0(\phi^p)=d_G\circ\phi^p-(-1)^{p+q}\phi^p\circ d_F.\]
\end{Prop}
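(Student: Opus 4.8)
The plan is to identify \eqref{SS} with the spectral sequence associated to the filtered cochain complex $\bigl(\mathsf{Loc}_{\infty}^{\bullet,\Cal}(K)(F,G),\,D\bigr)$ equipped with the decreasing filtration $F^{\bullet}$ defined above. Once the filtration is shown to be compatible with $D$, everything else is the standard machinery of the spectral sequence of a filtered complex, so the substance of the proof is concentrated in a single simplicial-degree count applied to the four terms of $D\phi = \hat\delta\phi + d\phi + G\cup\phi - (-1)^{\lvert\phi\rvert}\phi\cup F$.

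First I would check that $D(F^{k})\subseteq F^{k}$. For $\phi\in F^{k}$ every simplicial component of $\phi$ sits in simplicial degree $\geq k$, and I would run through the four terms: the internal differential $d$ preserves simplicial degree exactly, since $(d\phi)(\sigma_{m})$ involves only $\phi^{m}$; the term $\hat\delta\phi$ strictly raises simplicial degree, since $(\hat\delta\phi)(\sigma_{m})$ involves only $\phi^{m-1}$; and in $G\cup\phi$ and $\phi\cup F$ the summand coming from a splitting of $\sigma_{m}$ into faces of dimensions $t$ and $m-t$ involves $\phi^{m-t}$ (resp.\ $\phi^{t}$) while the complementary face has dimension $\geq 1$, so a nonzero contribution again forces $m\geq k+1$. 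Hence the last three terms all land in $F^{k+1}\subseteq F^{k}$ and $D$ is filtered.

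Granting this, the usual construction produces a spectral sequence $\{E_{r}^{pq},d_{r}\}$ with $E_{0}^{pq}=F^{p}\mathsf{Loc}_{\infty}^{p+q}(F,G)/F^{p+1}\mathsf{Loc}_{\infty}^{p+q}(F,G)$; an element of this quotient is represented uniquely by its simplicial degree-$p$ piece, a map $\phi^{p}\colon K_{p}\to\Cal^{q}$ with $\phi^{p}(\sigma)\in\Cal^{q}(F(\sigma_{(p)}),G(\sigma_{(0)}))$, which is the stated $E_{0}$-page. Since $d$ is the only term of $D$ that preserves the filtration degree (the other three land in $F^{p+1}$ by the above), the induced differential $d_{0}$ is the one induced by $d$, namely $d_{0}(\phi^{p})=d_{G}\circ\phi^{p}-(-1)^{p+q}\phi^{p}\circ d_{F}$ on $E_{0}^{pq}$ (the sign being the Koszul sign incurred in passing the degree-one $d_{F}$ across $\phi^{p}$, with $\lvert\phi^{p}\rvert=p+q$), and $d_{0}^{2}=0$ is immediate from $d_{F(x)}^{2}=d_{G(x)}^{2}=0$. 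Finally, the filtration is exhaustive ($F^{0}$ is the whole complex, since every component has simplicial degree $\geq 0$) and Hausdorff ($\bigcap_{k}F^{k}=0$, as a morphism all of whose simplicial components vanish is zero), so convergence of $E_{r}^{pq}$ to $H^{p+q}\bigl(\mathsf{Loc}_{\infty}^{\bullet,\Cal}(K)(F,G)\bigr)$ follows from the standard convergence theorem — the filtration being bounded on each morphism when the morphism complex is read as a direct sum of its bidegree pieces, or by completeness otherwise.

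The one genuinely delicate point I expect is the degree bookkeeping of Step 1: one must use the explicit formula for $\cup$, together with the fact that the comultiplication $\Delta$ splits a simplex only into faces of strictly positive dimension, to be sure that none of the ``off-diagonal'' terms of $D$ can lower simplicial degree. Everything else is the formal theory of filtered complexes, together with careful tracking of the Koszul signs carried by $T$, $K$, and $J$.
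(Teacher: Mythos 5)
Your proof is correct and fills a gap the paper leaves implicit: the paper states this Proposition without proof, deferring to the analogous Theorem 2.5.1 of \cite{B1} later on, and the argument is precisely the standard spectral sequence of a filtered cochain complex, exactly as you present it. Your degree bookkeeping (that $\hat\delta$, $G\cup -$, and $-\cup F$ strictly raise simplicial degree because the cup-product only sums over splittings into faces of strictly positive dimension, while $d$ alone preserves it) is the whole content, and your identification of $d_0$ with the correct Koszul sign $(-1)^{p+q}$ matches the statement (note there is an apparent typo in the paper's displayed formula for $d\phi$ on morphisms, which omits the sign present both in the analogous formula for $df$ on objects and in this Proposition; your sign is the consistent one).
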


\begin{Cor}\label{perfect} For two infinity-local systems $F$ and $G$, the $E_1$-term of the spectral sequence is a local system in the ordinary sense. 
\end{Cor}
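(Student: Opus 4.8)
The plan is to identify the entire pair $(E_1^{\bullet\bullet},d_1)$ with the morphism complex, in the ordinary category of local systems on $K$, between two honest local systems of graded vector spaces extracted from $F$ and $G$. Write the structure elements as $f=f^1+f^2+\cdots$ and $g=g^1+g^2+\cdots$. For a vertex $x\in K_0$ set $HF(x):=H^{\bullet}\!\big(F(x),d_{F(x)}\big)$ and $HG(x):=H^{\bullet}\!\big(G(x),d_{G(x)}\big)$, regarded as graded vector spaces with zero differential, and for an edge $e$ let $HF(e):=[f^1(e)]$ and $HG(e):=[g^1(e)]$ be the induced maps on cohomology; these make sense because the simplicial-degree-$1$ part of the Maurer--Cartan equation restricted to a $1$-simplex reads $df^1(e)=0$ (on $1$-simplices $\hat{\delta}$ contributes nothing and $f\cup f$ is supported in simplicial degree $\ge 2$), so $f^1(e)$ is a chain map, and likewise for $g^1(e)$.

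First I would pin down $E_1$ as a bigraded object. For fixed $p$, the differential $d_0(\phi^p)=d_G\circ\phi^p-(-1)^{p+q}\phi^p\circ d_F$ is nothing but the internal $\mathrm{Hom}$-complex differential applied simplex-by-simplex on $\Cal^{\bullet}\!\big(F(\sigma_{(p)}),G(\sigma_{(0)})\big)$; since $k=\R$ is a field, taking $H^q$ of such a complex yields $\mathrm{Hom}^q$ of the cohomologies, naturally in the complexes. Hence $E_1^{pq}$ is the space of assignments $\psi:K_p\to\Cal^q$ with $\psi(\sigma)\in\mathrm{Hom}^q\!\big(HF(\sigma_{(p)}),HG(\sigma_{(0)})\big)$, i.e. the $(p,q)$-component of the cochain complex of $K$ with the local coefficients $x\mapsto\mathrm{Hom}\!\big(HF(x),HG(x)\big)$. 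The differential $d_1$ is the part of $D\phi=\hat{\delta}\phi+d\phi+G\cup\phi-(-1)^{|\phi|}\phi\cup F$ that raises simplicial degree by exactly one: the $d\phi$-term preserves simplicial degree (it is $d_0$), the terms $g^{\ge 2}\cup\phi^p$ and $\phi^p\cup f^{\ge 2}$ raise it by at least two and die on $E_1$, and what survives is $\hat{\delta}\phi^p+g^1\cup\phi^p-(-1)^{|\phi^p|}\phi^p\cup f^1$. On $d_0$-cohomology $g^1$ and $f^1$ act through $HG$ and $HF$, and by the cup-product formula $g^1\cup(\cdot)$ becomes postcomposition with the initial edge through $HG$, $-(-1)^{|\cdot|}(\cdot)\cup f^1$ becomes precomposition with the final edge through $HF$, and $\hat{\delta}$ is the alternating sum over the inner faces; this is exactly the classical coboundary for cochains of $K$ with the above local coefficients.

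It remains to see that $HF$ and $HG$ are genuinely local systems in the ordinary sense, which is the content of the corollary and is also what makes $d_1^2=0$ and the identifications above literal rather than merely formal. Isolating the appropriate homogeneous piece of $\hat{\delta}F+dF+F\cup F=0$ on a $2$-simplex $\sigma$ shows that $f^1(\partial_1\sigma)$ coincides, modulo the $d_{\Cal}$-coboundary $d_{\Cal}\!\big(f^2(\sigma)\big)$, with $\pm f^1(\partial_2\sigma)\circ f^1(\partial_0\sigma)$; passing to cohomology kills the coboundary and gives $HF(\partial_1\sigma)=\pm\,HF(\partial_2\sigma)\circ HF(\partial_0\sigma)$, so $x\mapsto HF(x)$, $e\mapsto HF(e)$ is a functor on the homotopy category of $K$, and it inverts equivalences because an $\infty$-local system carries an equivalence of $K$ to a quasi-isomorphism (apply the same relation to a $2$-simplex exhibiting an inverse). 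Thus $HF$ and $HG$ — hence $\mathrm{Hom}(HF,HG)$ — are ordinary local systems, genuine $\pi_1$-representations when $K$ is a Kan complex such as $\pi_{\infty}M$, and $(E_1^{\bullet\bullet},d_1)$ is exactly $\Loc(HF,HG)$ with $HF,HG$ regarded as $\infty$-local systems with vanishing higher structure and zero differential; in particular $E_2^{pq}=H^p\!\big(K;\mathrm{Hom}^q(HF,HG)\big)$. I expect this last step to be the main obstacle: extracting the correct homogeneous component of the higher Maurer--Cartan equation on $2$-simplices and tracking the signs so that it descends to strict functoriality, together with the compatible sign bookkeeping needed to match $d_1$ with the classical local-coefficient coboundary; the remaining steps are a matter of unwinding definitions.
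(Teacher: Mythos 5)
The paper offers no proof at all for this corollary: it is simply asserted as a consequence of the preceding spectral-sequence proposition, which is itself stated without proof, and the only subsequent use of it (in the quasi-full-faithfulness argument) is the statement that $H^{\bullet}\big((\RH(E_i),\E^0_i)\big)$ is an ordinary local system on $M$. Your argument supplies the missing details and is essentially the intended reading.

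Your proof is correct. The two key verifications both come out right under the paper's conventions: on a $1$-simplex $e$ the Maurer--Cartan equation has no $\hat\delta$-contribution (the sum in $\delta f^0$ is empty) and no cup contribution (the sum in $f\cup f$ starts at $t=1$ and ends at $t=0$), so it reduces to $d\big(f^1(e)\big)=0$, making $f^1(e)$ a chain map; and on a $2$-simplex $\sigma$ the remaining Maurer--Cartan component gives
\[
f^1(\partial_1\sigma)\;-\;f^1\big(\sigma_{(01)}\big)\circ f^1\big(\sigma_{(12)}\big)\;=\;-\,d\big(f^2(\sigma)\big),
\]
so $HF$ is strictly functorial after passing to cohomology. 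Since $\pi_\infty M$ is a Kan complex, every edge is invertible up to a $2$-simplex, so $HF(e)$ is an isomorphism and $HF$, $HG$, hence $\mathrm{Hom}(HF,HG)$, are genuine local systems. Your identification of $d_1$ with the local-coefficient coboundary is also the right picture: $d$ preserves the simplicial filtration, $\hat\delta$, $g^1\cup(\cdot)$ and $(\cdot)\cup f^1$ raise it by exactly one, and the higher cup terms drop out on $E_1$. One small point of emphasis, which you already flag: the signs in $\hat\delta=\delta\circ T$ and in the twist $(-1)^{p+q}$ in $d_0$ must be tracked consistently to match the classical coboundary; this is bookkeeping rather than substance, and your presentation of it is sound.
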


\begin{Prop}\label{HE}
A closed morphism $\phi \in \mathsf{Loc}_{\infty}^{\Cal}(K)^0(F,G)$ is a homotopy equivalence if and only if $\phi^0\colon (F_x, d_F)\to (G_x,d_G)$ is a
quasi-isomorphism of complexes for all $x\in K_0$.
\end{Prop}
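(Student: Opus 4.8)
The plan is to prove the two implications separately, with the filtration spectral sequence \eqref{SS} as the main tool; the forward implication is elementary and the substance lies in the converse.

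For the forward implication, suppose $\phi$ is a homotopy equivalence, so there are $\psi\in\mathsf{Loc}_{\infty}^{0,\Cal}(K)(G,F)$ and homotopies $H_F,H_G$ of degree $-1$ with $\psi\circ\phi-\mathrm{id}_F=DH_F$ and $\phi\circ\psi-\mathrm{id}_G=DH_G$. Every term appearing in $D$ and in the composition $\circ$ other than the pointwise differential raises simplicial degree, so none of them contributes to the component on $K_0$. Reading off that component at a vertex $x$ shows that $\psi^0_x\circ\phi^0_x$ is chain homotopic to $\mathrm{id}_{F_x}$ and $\phi^0_x\circ\psi^0_x$ to $\mathrm{id}_{G_x}$; hence $\phi^0_x\colon(F_x,d_F)\to(G_x,d_G)$ is a homotopy equivalence of complexes, in particular a quasi-isomorphism.

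For the converse, assume $\phi^0_x$ is a quasi-isomorphism for every $x\in K_0$. Since $\phi$ is closed of degree $0$, the Maurer--Cartan equation for the cone element $c(\phi)$ collapses on the diagonal to the Maurer--Cartan equations of $F$ and $G$ and on the off-diagonal to $D\phi=0$, so $C(\phi)$ is a bona fide $\infty$-local system; in the pre-triangulated category $\mathsf{Loc}_{\infty}^{\Cal}(K)$ it occupies a triangle $F\to G\to C(\phi)\to F[1]$ with connecting map $\phi$. Thus $\phi$ is invertible in $\Ho\mathsf{Loc}_{\infty}^{\Cal}(K)$ if and only if $C(\phi)\simeq 0$, i.e. if and only if $\mathrm{id}_{C(\phi)}$ is a coboundary in $\mathsf{Loc}_{\infty}^{\bullet,\Cal}(K)(C(\phi),C(\phi))$. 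I would compute the cohomology of that complex with the spectral sequence \eqref{SS}: by Corollary~\ref{perfect} its $E_1$-page is the ordinary local system on $K$ whose stalk at $x$ is the cohomology of $\Cal^{\bullet}(C(\phi)_x,C(\phi)_x)$ with the pointwise differential. But $C(\phi)_x$ is exactly the algebraic mapping cone of the quasi-isomorphism $\phi^0_x$, hence acyclic, so $\Cal^{\bullet}(C(\phi)_x,C(\phi)_x)$ is acyclic and $E_1=0$. Granting convergence of \eqref{SS} --- which holds whenever the simplicial-degree filtration on the morphism complex is finite in each total degree, e.g. when the $F_x$ and $G_x$ have uniformly bounded amplitude --- the complex is acyclic, $\mathrm{id}_{C(\phi)}$ is a coboundary, and $\phi$ is a homotopy equivalence.

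The two places I expect friction are: (i) pinning down that $C(\phi)$ really is an $\infty$-local system and that $\phi$ is the connecting map of the resulting triangle, which is where the shift signs in $c(\phi)$ must be handled with care; and (ii) the convergence of \eqref{SS} when $K$ carries simplices in arbitrarily high dimension. If one prefers to avoid (ii), the converse can instead be done by constructing a homotopy inverse $\psi=\psi^0+\psi^1+\cdots$ by induction on simplicial degree, starting from pointwise homotopy inverses $\psi^0_x$ of $\phi^0_x$ and correcting at each stage; the obstruction to the correction at stage $n$ lives in a group that vanishes precisely because $[\phi^0_x]$ is invertible in $\Ho\Cal$, but tracking the Maurer--Cartan corrections makes this the more computational route.
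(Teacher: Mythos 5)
Your argument is, in substance, the one the paper has in mind.  The paper sets up the filtration spectral sequence~\eqref{SS} and Corollary~\ref{perfect} immediately before Proposition~\ref{HE} precisely so that the proof can be deferred (to Proposition~2.5.2 of \cite{B1}), and you reconstruct that argument: reduce the invertibility of $\phi$ in $\Ho\mathsf{Loc}_{\infty}^{\Cal}(K)$ to the vanishing of $C(\phi)$, then feed the endomorphism complex of $C(\phi)$ into the spectral sequence.  Your observation that the $E_1$ page vanishes because each $C(\phi)_x$ is the mapping cone of a quasi-isomorphism, hence contractible over the field $\R$, so that $\Cal^\bullet(C(\phi)_y,C(\phi)_z)$ is acyclic for all $y,z$, is exactly right, and the forward implication (restriction to $K_0$ gives ordinary chain homotopies) is correct and elementary.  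The two frictions you name are genuine: (i) one must check that $c(\phi)$ satisfies the Maurer--Cartan equation when $\phi$ is closed of degree $0$ — it does, because the off-diagonal entry of $\hat\delta c(\phi)+dc(\phi)+c(\phi)\cup c(\phi)$ is, after unwinding the $[1]$-shift signs, exactly $D\phi=0$ — and (ii) convergence of~\eqref{SS} requires the simplicial-degree filtration to be finite in each total degree, which holds when $F_x$ and $G_x$ have uniformly bounded amplitude (as is the case throughout the paper, since the target is $\Perf$), but is not automatic for arbitrary $\infty$-local systems.  Your proposed fallback of constructing a homotopy inverse $\psi=\psi^0+\psi^1+\cdots$ by induction on simplicial degree, using the pointwise quasi-inverses $\psi^0_x$ as the base case and an obstruction-theoretic correction at each stage, is the standard way to sidestep (ii); it is more computational but does not need the boundedness hypothesis.
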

\begin{proof} The proof follows as in the proof of Proposition 2.5.2 in \cite{MR2648899}.\end{proof}

\subsection{$A_\infty$-local systems}\label{sec:2.4}
In what remains of section $2$ we present a mild extension of the notion of an infinity-local system.  This notion is included for the sake of general interest and potential future applications.  The basic observation is that an infinity-local system need not strictly take values in a dg-category; there are many variations on the main theme.  As an example, in this section we present a notion of an $A_\infty$-local system ---an infinity-local system valued in an $A_\infty$-category.  We will use almost entirely the same notation as before.

Let $\mathcal{C}$ be an $A_{\infty}$-category, with multiplications denoted $\mu_i$, and let $K$ be a simplicial set.  

As before, an object $F$ consists of a choice of a map $F \colon K_0 \rightarrow \Ob\mathcal{C}$ along with an element $f$ of total degree $1$ from the set
\begin{equation*}
 f \in \mathcal{C}^1_{F}(K) := \underset{i+j=1,i\geq 0}{\oplus}\mathcal{C}_{F}^{i,j},
\end{equation*}
with,
\begin{equation*}
 \mathcal{C}_{F}^{i,j} := \lbrace \mbox{$k$-linear maps } f\colon K_i \rightarrow \mathcal{C}^j \vert F(\sigma)\in \mathcal{C}^j(F(\sigma_{(i)}),F(\sigma_{(0)}))\rbrace.
\end{equation*}
$f$ will be required to satisfy a generalized Maurer--Cartan equation.

Morphisms are also as before: 
\begin{equation*}
 \ALoc^q(F,G) = \lbrace \mbox{k-linear maps} K_i \rightarrow \mathcal{C}^j(F(i),G(0))\rbrace.
\end{equation*}

We define a series of multiplications on composable tuples of morphisms.  Consider an $n+1$-tuple of objects $(F_n,\dotsc, F_0)$ and a corresponding tuple of composable morphisms $(\phi_n \otimes \dotsc \otimes \phi_0)$.  Then we have
\begin{equation*}
 m_n \colon \otimes_{n\geq i \geq 0}\ALoc^{\bullet}(F_{i+1},F_i) \rightarrow \ALoc(F_0,F_n)^{\bullet}[2-n],
\end{equation*}
given by, for $n=1$,
\begin{equation*}
 m_1 \colon \phi \mapsto \mu_1\circ (\phi) -(-1)^{\lvert \phi \rvert}\phi \circ \mu_1+ (-1)^{\lvert \phi \rvert}(\sum_l (-1)^l \phi\circ \partial_l),
\end{equation*}
and for $n \geq 1$,
\begin{equation*}
 m_n\colon (\phi_n \otimes \dotsc \otimes \phi_0) \mapsto \mu_n \circ (\phi_n \otimes \dotsc \otimes \phi_0) \circ \Delta^{(n)}.
\end{equation*}

\begin{Defn}
A pair $(F,f)$ with $f \in \mathcal{C}_{F}^1(K)$ such that $0 = \sum_{i=1}^{\infty} m_i(f^{\otimes i})$ is called an $A_\infty$-local system.  The set of $A_\infty$-local systems is denoted $\mathsf{Loc}_{A_\infty}^{\Cal}(K)$.
\end{Defn}

It is important to note that the Maurer--Cartan equation above is not finite, but has a finite number of terms when evaluated on any simplex due to the fact that $\Delta^{n}(\sigma)=0$ for $n>>0$.

%

\section{Iterated Integrals and Holonomy of $\Z$-graded Connections}\label{sec:3}
Now let $\As=(\mathcal{A}^\bullet(M),d)$ be the de Rham differential graded algebra (DGA) of a compact, closed $C^{\infty}$-manifold  $M$.  
\begin{Defn}
 ${\pi_{\infty}}M$ ---the $\infty$-\emph{groupoid} of $M$--- is $\Sing_{\bullet}^{\infty}M$, the simplicial set over $k = \R$ of $C^{\infty}$-simplices.
\end{Defn}
By $\Cal$ we denote the dg-category of (cohomological) complexes over $\R$.  Our main goal in this text is to derive an $A_\infty$-quasi-equivalence between $\Perf$ and $\mathsf{Loc}_{\infty}^{\Cal}({\pi_{\infty}}M)$.  The former is a dg-category of \emph{modules with superconnection} or \emph{cohesive modules} \cite{MR2648899}; to wit, an object of $\Perf$ is a pair $(E^{\bullet},\E)$ where $E^{\bullet}$ is a $\Z$-graded (bounded), finitely-generated, projective, right $\A^0$-module and $\E$ is a $\Z$-connection with the flatness condition $\E \circ \E = 0$.  This category should not be confused with the category of dg-modules over the one-object dg-category $\mathcal{A}$.  $\Perf$ is a finer invariant ---see \cite{MR2648899}.

By the Serre-Swan correspondence, an object of $\Perf$ corresponds to the smooth sections of a $\Z$-graded vector bundle $V^{\bullet}$ over $M$ with the given flat $\Z$-connection.  In the preprint (arXiv:0912.0249v1) Kiyoshi Igusa presents from scratch a notion of higher parallel transport for a $\Z$-connection.  This is a tweaked example of Chen's higher transport outlined in \cite{MR0454968} which makes crucial use of his theory of iterated integrals.  In this section we reformulate and extend this idea to produce a higher holonomy functor from $\Perf$ to $\Loc$.  To start we present a version of iterated integrals valued in a graded endomorphism bundle.  


\subsection{Sign Conventions}\label{sec:3.1}
Many of the computations in this paper involve manipulating elements in tensor-products of graded vector spaces.  As a result there are a number of relevant gradings and sign-convention choices.  We take the standard approach and employ the Koszul conventions in our computations.  We also use some non-standard notation to denote alternation with respect to various indices, or simply to pick out those indices.

Let $V$ be a graded vector bundle on $M$; then $\End(V)$ is a graded algebra bundle on $M$.  The symbols $T,J,K$ will be used to denote an alternating sign of the degree of a form valued in a graded bundle with respect to the total degree, form degree, and bundle-grading degree respectively.  For instance if $\omega \in V^{q}\otimes_{\A^0}\A^p$, then $T\omega = (-1)^{p+q}\omega, K\omega = (-1)^q\omega$, and $J\omega = (-1)^p\omega$.  The similar convention carries over for forms valued in the $\End(V)$ which has an obvious grading.  Also, an element of $f \in \End^k(V)$ can be broken into a sum $f = \sum_i f_i$ where $f_i \colon  V^i \rightarrow V^{i + k}$.  If we want to pick out this index, we write $I(f_i) := i.$

\subsection{Path Space Calculus}\label{sec:3.2}
In \cite{MR0454968} and earlier works, Chen defined a notion of a differentiable space ---the archetypal differentiable space being  $PM$ for some smooth manifold $M$.  This is a space whose topological structure is defined in terms of an atlas of plots ---maps of convex neighborhoods of the origin in $\R^n$ into the space which cohere with composition by smooth maps--- and the relevant analytic and topological constructs are defined in terms of how they pull back onto the plots.   In particular one can construct a reasonable definition of vector bundles over a differentiable space as well as differential forms.  One can likewise define an exterior differential, and subsequently a so-called Chen de Rham complex \cite{MR1945356}.  We will try to make transparent use of these constructions, but we defer the reader to the detailed discussions of these matters in \cite{MR0454968},\cite{MR1945356}.  

The primary reason that path-space calculus is relevant to our discussion is that the holonomy of a $\Z$-graded connection on $V$ can be defined as a sequence of smooth forms on $PM$  with values in the bundle $\Hom(p_1^*V,p_0^*V)$.  The usual parallel transport will be the $0$-form part of the holonomy.  The higher terms will constitute the so-called \emph{higher holonomy}.

\subsection{Iterated Integrals}\label{sec:3.3}

Let us parametrize the $k$-simplex by $k$-tuples $t=(1 \geq t_1 \geq t_2 \geq \dotsc \geq t_k \geq 0)$.  Then we define the obvious evaluation and projection maps:
\begin{align*}
 \ev_k \colon & PM \times \Delta^k \rightarrow M^k\colon  (\gamma,(t_1,\dotsc, t_k)) \mapsto (\gamma(t_1),\gamma(t_2),\dotsc,\gamma(t_k)),\\
 \pi \colon & PM \times \Delta^k \rightarrow PM \colon (\gamma,(t_1,\dotsc, t_k)) \mapsto \gamma.
\end{align*}
Let $V$ be a graded bundle on $PM$, and in a trivializing patch we identify $\End(V)$ as a graded matrix bundle $E:= Mat^{\bullet}(V)$.    
Define $\iota$ to be the embedding 
\begin{equation*}
 \iota\colon (\Gamma(E\otimes \Lambda^\bullet T^*X))^{\otimes_\R k} \rightarrow \Gamma (E^{\boxtimes k}\otimes \Lambda^\bullet T^*X^{\boxtimes k})
\end{equation*}
\begin{equation*}
(a_1 \otimes_\R \dotsc \otimes_\R a_k) \mapsto (a_1 \boxtimes \dotsc \boxtimes a_k).   
\end{equation*}
Given the space of forms $\ev_k^*(\Gamma E^{\boxtimes k}\otimes \Lambda^\bullet T^*(X)^{\boxtimes k})$ we can use the multiplication in the fibers of $E$ and $\Lambda^\bullet T^*(X)$ to define 
\begin{equation*}
 \mu\colon \ev_k^*\Gamma(E^{\boxtimes k}\otimes (\Lambda^\bullet T^*X)^{\boxtimes k}) \rightarrow \Gamma(p_0^*E\otimes \Lambda^\bullet T^*(PM \times \Delta^k)).
\end{equation*}  
\begin{Defn}
The iterated integral map is the composition
\begin{equation}
 \int a_1 a_2 \dotsc a_k := (-1)^{\spadesuit}\pi_*(\mu ( \ev_k^* ( \iota(a_1 \otimes_\R \dotsc \otimes_\R a_k)))),
\end{equation}
with,
\begin{equation}  \spadesuit = \sum_{1 \leq i< k} (T(a_i)-1)(k-i). 
\end{equation}
Since  $E$  is graded, the elements $\{ a_i \}$ are bi-graded as usual, with $T(\bullet)$ denoting the total degree.
\end{Defn}

\subsection{$\Z$-graded Connection Holonomy}\label{sec:3.4}

Suppose $V$ has a $\Z$-connection $\E$.  Locally $\E$ is of the form $d -[A^0+A^1+\dotsc +A^m]$.  (With the above conventions, $(-1)^kd$ is locally the trivial connection on $E^k$)  Let $\omega = A^0+A^1+\dotsc+A^m$.  This is a form of total degree $1$, i.e. in $\oplus \End^{1-i}(V)\otimes_{\A^0}\A^i$.  To any such form we can associate its holonomy
\begin{equation*}
 \Psi := I + \int \omega + \int \omega \omega + \int \omega \omega \omega +\dotsc, 
\end{equation*}
which breaks further into its components with respect to the form-grading.  For instance,
\begin{equation*}
 \Psi_k =  \int A^{k+1} + \sum_{i+j = k+2} \int A^iA^j + \sum_{i_1+i_2+i_3 = k+3} \int A^{i_1}A^{i_2}A^{i_3} + \dotsc
\end{equation*}

Chen calculated the differential of the holonomy (without the graded modifications we have worked into our definition), 
\begin{equation*}
 d\Psi = - \int \kappa + (-\int \kappa \omega + \int J\omega \kappa) + \dotsc + \sum_{i+j=r-1}(-1)^{i+1}\int (J \omega)^i \kappa \omega^j + \dotsc + - p_0^*\omega \wedge \Psi + J\Psi \wedge p_1^*\omega,
\end{equation*}
in terms of $\kappa = d\omega - J\omega \wedge \omega$, the \emph{curvature} of $\omega$.

An analog of the above calculation can be proved with two basic lemmas which are modifications of Chen's.  Let $\partial \pi$ be the composition  
\begin{equation*}
 \partial \pi := \pi \circ (in \times id)\colon \,\,\,\, \partial \Delta^k \times X \mto{in \times id} \Delta^k \times X \mto{\pi} X.
\end{equation*} Also define 
\begin{equation*}
 \pi_*(f(t,x)_I \otimes dVol_{\Delta^k} \wedge dx^I) := (\int_{\Delta^k}(-1)^{k \lvert f \rvert}f(t,x)_I dVol_{\Delta^k}) \otimes dx^I
\end{equation*}
---alternating integration along the fiber.  Then we have,

\begin{Lem}
\begin{equation*}
 \pi_* \circ d - (-1)^k d \circ \pi_* = (\partial\pi)_* \circ (in \times id)^*. 
\end{equation*}
And for any $A \in \End^\bullet(V^\bullet) \otimes \A(PM), \,\,\,\, B \in \End^{\bullet}(V^{\bullet})\otimes \A(\Delta^k \times PM)$,
\begin{equation*}
\begin{array}{lcl}
 \pi_*(\pi^*(A)\circ B) &=& (-1)^{k T(A)}A \circ \pi_* B,\\
 \pi_*(B \circ \pi^*A)  &=& (\pi_*B) \circ A.
\end{array}
\end{equation*}
\end{Lem}
\begin{proof} Exercise \end{proof}

\begin{Prop}
\begin{align*}
 d \int \omega_1 \dotsc \omega_r =& \sum_{i=1}^{r} (-1)^{i} \int T\omega_1 \dotsc d\omega_i \omega_{i+1} \dotsc \omega_r\\
 +& \sum_{i=1}^{r-1} (-1)^i \int T\omega_1 \dotsc (T\omega_i \circ \omega_{i+1}) \omega_{i+2} \dotsc \omega_r \\
 +& p_1^*\omega_1 \circ \int \omega_2 \dotsc \omega_r - T(\int \omega_1 \dotsc \omega_{r-1})\circ p_0^*\omega_r.
\end{align*}
\end{Prop}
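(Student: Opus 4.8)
The plan is to prove the identity by induction on $r$, taking as the sole analytic input the preceding Proposition, $d_{PM}\int_F w + \int_F d_{PM}w = F_{t=1}^*w\,\Xi - F_{t=0}^*w\,\Xi$, together with the recursive definition $\int\omega_1\dotsc\omega_r = \int_F\bigl(T(\int\omega_1\dotsc\omega_{r-1})\circ p_1^*\omega_r\,\Xi\bigr)$ and the Leibniz rule $d(A\circ B) = dA\circ B + TA\circ dB$ from the sign conventions.

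For the base case $r=1$, unwind $\int\omega_1 = \int_F(p_1^*\omega_1\,\Xi)$ and apply the Poincar\'e Proposition to $w = p_1^*\omega_1\,\Xi$. Since $d$ commutes with $p_1^*$ and kills the locally constant section $\Xi$, the term $-\int_F d_{PM}w$ contributes $-\int d\omega_1$; since $F_{t=1}=\mathrm{id}_{PM}$, $p_1\circ F_{t=0}=p_0$, and $\Xi^2=\mathrm{Id}$, the boundary terms contribute $p_1^*\omega_1 - p_0^*\omega_1$. Reading the empty iterated integral as the identity section $I$ and $T(I)=I$, this is exactly the claimed formula at $r=1$.

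For the inductive step set $\Omega=\int\omega_1\dotsc\omega_{r-1}$ and $w=T(\Omega)\circ p_1^*\omega_r\,\Xi$, so that $\int\omega_1\dotsc\omega_r=\int_F w$. The Poincar\'e Proposition splits $d\int\omega_1\dotsc\omega_r$ into three pieces. The boundary term at $t=1$ is $F_{t=1}^*w\,\Xi = T(\Omega)\circ p_1^*\omega_r$, the last term of the statement; the boundary term at $t=0$ vanishes for $r\geq 2$ because $F_{t=0}$ factors through the constant paths and iterated integrals of positive length restrict to zero there. For the piece $-\int_F d_{PM}w$, expand $d_{PM}w = -T(d\Omega)\circ p_1^*\omega_r\,\Xi + \Omega\circ p_1^*(d\omega_r)\,\Xi$ using Leibniz, $d\circ T = -T\circ d$, and $d\Xi=0$. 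Feeding $\Omega\circ p_1^*(d\omega_r)\,\Xi$ back through $-\int_F$ and the recursion yields the $i=r$ summand of the first sum. Into $-T(d\Omega)$ substitute the inductive hypothesis for $d\Omega$, and propagate each of its four families through $T(\,\cdot\,)$, composition with $p_1^*\omega_r$, and $\int_F$: the first sum of $d\Omega$ gives the $i=1,\dotsc,r-1$ summands of the first sum (completing it); the second sum of $d\Omega$ gives the $i=1,\dotsc,r-2$ summands of the second sum; the ``last face'' term $T(\int\omega_1\dotsc\omega_{r-2})\circ p_1^*\omega_{r-1}$ of $d\Omega$, using $p_1^*(T\omega_{r-1})\circ p_1^*\omega_r = p_1^*(T\omega_{r-1}\circ\omega_r)$ and the recursion, produces the missing $i=r-1$ summand of the second sum; and the term $-p_0^*\omega_1\circ\int\omega_2\dotsc\omega_{r-1}$ of $d\Omega$, using that $T$ is multiplicative for $\circ$ and that $p_0^*\omega_1$ is constant along the spaghetti homotopy $F$ and hence pulls out of $\int_F$, produces $-p_0^*\omega_1\circ\int\omega_2\dotsc\omega_r$. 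Assembling the pieces gives the stated formula.

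The real work is entirely in the signs: one must carry the operators $T$, $J$, $K$ through every contraction $\iota_{\partial/\partial\tau}$, every composition in $\mathrm{End}^\bullet(V)\otimes_{\A^0}\A^\bullet$, every insertion of $\Xi$, and the reordering sign $\spadesuit$, and check that they combine to exactly the $(-1)^i$ and $T$ decorations appearing in the four families. I would fix all conventions at the outset and verify $r=1$ and $r=2$ by hand as consistency checks before running the general induction; past that point the argument is pure bookkeeping.
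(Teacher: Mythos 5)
Your proof follows the same route as the paper: apply the Poincar\'e lemma to the recursive definition $\int\omega_1\dotsc\omega_r = \int_F\bigl(T(\int\omega_1\dotsc\omega_{r-1})\circ p_1^*\omega_r\,\Xi\bigr)$, observe that the $F_{t=0}^*$ boundary term vanishes, use the Leibniz rule to split the interior term, and close by induction. The paper's own proof is terser (it displays one step of this expansion and then invokes induction after checking $r=2$), whereas you spell out where each of the four families of terms in the inductive hypothesis lands, but the argument is the same.
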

\begin{proof}
 Using the previous lemma, and the definition of iterated integrals, we have
\begin{multline*}
 d \int \omega_1 \dotsc \omega_r = (-1)^{\spadesuit} d \circ \pi_* \circ \mu \circ \ev_r^* \circ \iota (\omega_1 \otimes \dotsc \otimes \omega_r) = \\
 = (-1)^{\spadesuit+r} \pi_* \circ d \circ \mu \circ \ev_r^* \circ \iota (\omega_1 \otimes \dotsc \otimes \omega_r)\\
 - (-1)^{\spadesuit+r}(\partial \pi)_* \circ (in \times id)^* \circ \mu \circ \ev_r^* \circ \iota (\omega_1 \otimes \dotsc \otimes \omega_r). 
\end{multline*}
Note that the faces of $\Delta^k$ are the sets $\{ t_1 = 1\}$, $\{ t_k = 0\}$, and $\{ t_i = t_{i+1} \}$ and that $d$ commutes with $\mu$ in the (Koszul) graded sense.  Hence this  formula expands/reduces to
\begin{multline*}
(-1)^{\spadesuit+r} \pi_* \circ \mu \circ \ev_r^* \circ \iota (\sum_i T\omega_1 \otimes \dotsc \otimes d\omega_i \otimes \dotsc \otimes \omega_r) \\
 -(-1)^{\spadesuit+r}(\partial \pi)_* \circ (in \times id)^* \circ \mu \circ \ev_r^* \circ \iota (\omega_1 \otimes \dotsc \otimes \omega_r).
\end{multline*}
\begin{multline*}
= \sum_i^r (-1)^i \int T\omega_1\dotsc d\omega_i \dotsc \omega_r \\
 \shoveleft{-(-1)^{\spadesuit+r} [\int_{t_1=1} \mu( p_1^* \omega_1 \otimes \ev_{r-1}^* \circ \iota (\omega_2 \otimes \dotsc \otimes \omega_r))}\\
 \shoveleft{+\int_{t_k=0} \mu (\ev_{r-1}^* \circ \iota (\omega_1 \otimes \dotsc \otimes \omega_{r-1}) \otimes p_0^*\omega_r)}\\
 + \sum_i \int_{t_i = t_{i+1}} \mu (\ev_{r-1}^* \circ \iota (\omega_1 \otimes \dotsc \otimes \omega_i \circ \omega_{i+1} \otimes \dotsc \otimes \omega_r))].
\end{multline*}
\begin{multline*}
 = \sum_i^r (-1)^i \int T\omega_1\dotsc d\omega_i \dotsc \omega_r\\
 \shoveleft{-(-1)^{\spadesuit+r}  [(-1)^{T(\omega_1)(r-1)} p_1^* \omega_1 \circ \int_{t_1=1} \mu( \ev_{r-1}^* \circ \iota (\omega_2 \otimes \dotsc \otimes \omega_r))}\\
 \shoveleft{+ \int_{t_k=0} \mu (\ev_{r-1}^* \circ \iota (\omega_1 \otimes \dotsc \otimes \omega_{r-1})) \circ p_0^*\omega_r}\\  
 + \sum_i \int_{t_i = t_{i+1}} \mu (\ev_{r-1}^* \circ \iota (\omega_1 \otimes \dotsc \otimes \omega_i \circ \omega_{i+1} \otimes \dotsc \otimes \omega_r))].
\end{multline*}
\begin{multline*}
 =\sum_i^r (-1)^i \int T\omega_1\dotsc d\omega_i \dotsc \omega_r +  p_1^* \omega_1 \circ \int \omega_2 \dotsc \omega_r\\
 -(-1)^{\sum_{i=1}^{r-1}(T(\omega_i)-1)}\int \omega_1 \dotsc \omega_{r-1} \circ p_0^*\omega_r\\
- \sum_i (-1)^{\sum_{j=1}^i (T(\omega_j)-1)}\int \omega_1 \dotsc (\omega_i \circ \omega_{i+1})  \dotsc \omega_r.
\end{multline*}
\begin{multline*}
= \sum_i^r (-1)^i \int T\omega_1\dotsc d\omega_i \dotsc \omega_r
 + p_1^* \omega_1 \circ \int \omega_2 \dotsc \omega_r \\
-T(\int \omega_1 \dotsc \omega_{r-1}) \circ p_0^*\omega_r
+ \sum_{i=1}^r\int T\omega_1 \dotsc (T\omega_i \circ \omega_{i+1})  \dotsc \omega_r.
\end{multline*}

On the subset of the path space with fixed endpoints, $PM(x_0, x_1)$, the pullbacks $p_i^*$ kill all but $0$-forms.  Hence we get 
\begin{multline*}
d \int \omega_1 \dotsc \omega_r = \\
\shoveleft{\sum_i^r (-1)^i \int T\omega_1\dotsc d\omega_i \dotsc \omega_r + p_1^* \omega_1^0 \circ \int \omega_2 \dotsc \omega_r}\\
 -T(\int \omega_1 \dotsc \omega_{r-1}) \circ p_0^*\omega_r^0 + \sum_{i=1}^r(-1)^i\int T\omega_1 \dotsc (T\omega_i \circ \omega_{i+1})  \dotsc \omega_r.
\end{multline*}
\end{proof} 

If $\omega$ has total degree $1$, then 
\begin{equation*}
 d \int (\omega)^r = \sum_{i+j+1=r} \int (\omega)^i d\omega (\omega)^j + \sum_{i+j+2=r}\int (\omega)^i (\omega \circ \omega) (\omega)^j 
 + p_1^*\omega^0 \circ \int (\omega)^{r-1} - \int (\omega)^{r-1}\circ p_0^* \omega^0.
\end{equation*}

So for $\omega$ in $\Gamma \End^{1-k}(V)\otimes_{\A^0}\A^k$, with $\Psi$ the holonomy of the local connection $d-\omega$,
\begin{equation}\label{ChenMod}
 d\Psi = [\int \varkappa + (\int \varkappa \omega + \int \omega \varkappa) + \dotsc
 + \sum_{i+j=r-1}\int (\omega)^i \varkappa \omega^j + \dotsc] +  -p_1^*\omega \circ \Psi + \Psi \circ p_0^*\omega,
\end{equation} 
in which $\varkappa := (d-\omega)\circ (d-\omega)= -d\omega - T\omega \circ \omega = -d\omega + \omega \circ \omega$ is the \emph{curvature} of $\omega$.  Note that if $\varkappa =0$ then we have 
\begin{equation}\label{FlatEqn}
 d\Psi = -p_1^*\omega \circ \Psi + \Psi \circ p_0^*\omega.
\end{equation} 
On $PM(x_0,x_1)$ this reduces further to
\begin{equation} \label{keystone}
 d\Psi =-p_0^*A^0\circ \Psi + \Psi \circ p_1^*A^0.
\end{equation} 
The condition $\varkappa = 0$ locally amounts to the series of equations
\begin{equation*}
 \begin{gathered}
A^0\circ A^0=0\\
A^0\circ A^1+A^1\circ A^0 =(d A^0)\\
\dotsc\\
\sum_{i=0}^{q+1}A^i \circ A^{q-i+1} =(dA^{q})\\
\dotsc
 \end{gathered}
\end{equation*}
which is identical to the flatness condition $\E \circ \E=0$ in $\Perf$.  With the help of the Stokes' formula the equation (\ref{keystone}) is equivalent to the integral form
\begin{equation*}
 -A^0_{x_1}\circ \int_{I^q}h^*\Psi_q +(-1)^q(\int_{I^q}h^*\Psi_q)\circ A^0_{x_0} = \int_{\partial I^q} h^*\Psi_{q-1},
\end{equation*}
in which $h\colon I^q \rightarrow P(M,x_0,x_1)$ is any $q$-family of paths inside a trivializing patch.

\subsection{Holonomy With Respect to the Pre-triangulated Structure}\label{sec:3.5}
\subsubsection{Holonomy with Respect to the Shift}\label{sec:3.5.1}
Let $(E^{\bullet},\E)$ be an element of $\Perf$, $d-A$ a local coordinate description, and $\Psi$ its associated holonomy.  The integral of the holonomy over a $k$-cube commutes with the shift functor.  Considering a particular term in the holonomy with form degree $k$,
\begin{multline*}
 \int_{I^k}(\int (A[q])^j) v[q] = (-1)^{(\lvert v \rvert + q)(j+k)}\int_{I^k}(\int (A[q]^j(v[q]))) =\\
=(-1)^{\lvert v \rvert (j+k) + qk}\int_{I^k}(\int A^j(v))= (-1)^{qk}\int_{I^k}(\int A^j)(v)[q].
\end{multline*}
The first sign shows up because the form-degree of the integrand is reduced by $j$ in the integral.  Later we will see $I^k$ as a cube in $PM$ induced in a particular way from a simplex $\sigma_{k+1}$ in $M$.  Likewise, supposing $\phi$ is a degree $p$ morphism between $E$, $F$ locally represented by the matrix-valued forms $A$, $B$ respectively, we see
\begin{multline*}
\int_{I^k}(\int B[q]^i \phi[p+q-1] A[p+q-1])v[p+q-1]= (-1)^{(\lvert v \rvert + p+q-1)(i+j+1 + k)} \cdot \\
\begin{array}{l}
\cdot \int_{I^k}(\int (B[q]^i\phi[p+q-1]A[p+q-1]^j(v[p+q-1])))\\
= (-1)^{(\lvert v \rvert +p-1)(i+j+1+k) +q(k+1)}\int_{I^k}(\int B^i\phi[p-1]A^i[p-1]v[p-1])\\
= (-1)^{q(k+1)}\int_{I^k}(\int B^i\phi[p-1]A^i[p-1])v[p-1]. 
\end{array}
\end{multline*}
 
\subsubsection{Holonomy of a Cone}\label{sec:3.5.2}

Suppose we have a morphism in $\Perf$, i.e. an element $\phi$ of total degree $q$ of
\begin{equation*}
 \Hom_{\Perf}^q(E_1,E_2) = \lbrace \phi\colon E_1 \otimes_{\mathcal{A}^0} \mathcal{A}^{\bullet} \rightarrow E_2 \otimes_{\mathcal{A}^0} \mathcal{A}^{\bullet} \vert \phi(ea) = (-1)^{q\lvert a \rvert}\phi(e)a \rbrace.
\end{equation*}
The differential is defined
\begin{equation*}
 d\phi := E_2 \circ  \phi - (-1)^{\lvert \phi \rvert} \phi \circ E_1.
\end{equation*}
We can construct the cone complex associated to $\phi$, $(C(\phi)^\bullet,D^\phi)$,
\begin{equation*}
 C(\phi)^k = (E_1^{k+1-q}\oplus E_2^k), \,\,\,\, D^\phi = \begin{pmatrix}
 (-1)^{1-q}\mathbb{E}_1  & 0 \\
 \phi & \mathbb{E}_2
\end{pmatrix}.
\end{equation*}
Note that $D^\phi$ is flat iff $\phi$ is a closed morphism.  

In a trivializing coordinate patch write $D^{\phi} = d- \omega$ and denote the corresponding $\Z$-connection holonomy by $\Psi^{\phi}$.  Then applying our Chen-formula (eqn. \ref{ChenMod}) for $d\Psi$ on $PM(x_0,x_1)$, we calculate,
\begin{equation*}
 d\Psi^{\phi} = \biggl[\int \varkappa + (\int \varkappa \omega + \int \omega \varkappa) + \dotsc
 + \sum_{i+j=r-1}\int \omega^i \varkappa \omega^j + \dotsc \biggr] -p_1^*\omega^0 \circ \Psi^{\phi} + \Psi^{\phi}\circ p_0^*\omega^0.
\end{equation*}
And since $D^{\phi}_{11}$, and $D^{\phi}_{22}$ are flat, it is evident that 
\begin{equation*}
\varkappa = -d\omega + \omega \circ \omega = 
\left(\begin{smallmatrix} 0 & 0\\ d\phi & 0
\end{smallmatrix} \right).
\end{equation*}
Then the 21-component is:
\begin{equation}
d\Psi^{\phi}_{21} = \biggl [\int d\phi + \dotsc + \sum_{i+j+2=r} \int (B)^i d\phi ((-1)^{1-q}A)^j + \dotsc \biggr] -p_1^*B^0\circ \Psi^{\phi}_{21} +(-1)^{1-q}\Psi^{\phi}_{21}\circ p_0^*A^0.
\end{equation}

Alternately, we first take $d\phi$ in $\Perf$ and take the holonomy of its cone $C(d\phi)$.  We already showed (eqn. \ref{FlatEqn}) that since $D^{d\phi}$ is flat, 
\begin{equation*}
d\Psi^{d\phi} = -p_1^*\omega^{d\phi,0}\circ \Psi^{d\phi} + \Psi^{d\phi}\circ p_0^*\omega^{d\phi,0}.
\end{equation*}
Expanding the definition we have,
\begin{multline*}
\Psi^{d\phi}_{21} = \int d\phi + \dotsc \sum_{i+j+1 = r} \int (B)^i d\phi ((-1)^qA)^j + \dotsc = \\
 = \int d\phi + \dotsc \sum_{i+j+1 = r} (-1)^j\int (B)^i d\phi ((-1)^{1-q}A)^j + \dotsc
\end{multline*}
According to the signs in our definition of iterated integrals, the sign changes by $j+1$ if we switch $d\phi$ from degree $1$ to degree $2$.  Thus, the above formula becomes
\begin{equation*}
 \Psi^{d\phi}_{21}= -\int d\phi - \dotsc -\sum_{i+j+1 = r}\int (B)^i d\phi ((-1)^{1-q}A)^j+ \dotsc
\end{equation*} 
And consequently,
\begin{equation}
 d\Psi^{\phi}_{21} = -\Psi^{d\phi}_{21} - p_1^*B^0\circ \Psi^{\phi}_{21}+ (-1)^{\lvert \phi \rvert-1}\Psi^{\phi}_{21}\circ p_0^* A^0.
\end{equation}

Going further, we can consider generalized cones associated to any string of morphisms 
\begin{equation*}
 \phi_n \otimes \dotsc \otimes \phi_1  \in \Perf(E_{n-1},E_{n})\otimes \dotsc \otimes \Perf(E_{0},E_1).                                                                    
\end{equation*}
Let $p_i$ denote the total degree of $\phi_i$, and define $D^{\phi_n \otimes \dotsc \otimes \phi_1}$ to be the total degree $1$ endomorphism of  
\begin{equation*}
  E_0^\bullet[n-\sum_1^n p_i] \oplus E_1^\bullet[n-1-\sum_2^{n}p_i] \oplus \dotsc \oplus E_n^\bullet
\end{equation*}
given by
\begin{equation*}
 D^{\phi_n \otimes \dotsc \otimes \phi_1} = \left(\begin{smallmatrix}  \E_0[n-\sum_{1}^np_i] & 0 & 0 &\dotsc & \dotsc & 0 & 0 \\
								      \phi_1[n-\sum_{1}^n p_i] & \E_1[n-1-\sum_{2}^np_i] & 0 & \dotsc & \dotsc & 0 &0\\
								      0 & \phi_2[n-1-\sum_2^{n} p_i] & \E_2[n-2-\sum_3^{n} p_i] & \dotsc & \dotsc & 0&0\\
								      \dotsc & \dotsc & \dotsc & \dotsc & \dotsc & \dotsc & \dotsc \\
								      0 & 0 & \dotsc & \dotsc & \phi_{n-1}[2-p_{n-1}-p_n] & \E_{n-1}[1-p_n] & 0\\
								      0 & 0 & \dotsc & \dotsc & 0 & \phi_n[1-p_n]& \E_n 
\end{smallmatrix} \right).
\end{equation*}
\begin{Defn}\label{GenHomCone}
We call 
\begin{equation*}
C(\phi_n \otimes \dotsc \otimes \phi_1) = ( E_0^\bullet[n-\sum_1^n p_i] \oplus E_1^\bullet[n-1-\sum_2^{n}p_i] \oplus \dotsc \oplus E_n^\bullet,D^{\phi_n \otimes \dotsc \otimes \phi_1})
\end{equation*}
the generalized homological cone associated to this $n$-tuple of morphisms.
\end{Defn}
In a local trivialization $ D^{\phi_n \otimes \dotsc \otimes \phi_1}= d-\omega, with$
\begin{equation*}
\omega = 
\left(\begin{smallmatrix}  A_0[n-\sum_{1}^np_i] & 0 & 0 &\dotsc & \dotsc & 0 & 0 \\
								      \phi_1[n-\sum_{1}^n p_i] & A_1[n-1-\sum_{2}^np_i] & 0 & \dotsc & \dotsc & 0 &0\\
								      0 & \phi_2[n-1-\sum_2^{n} p_i] & A_2[n-2-\sum_3^{n} p_i] & \dotsc & \dotsc & 0&0\\
								      \dotsc & \dotsc & \dotsc & \dotsc & \dotsc & \dotsc & \dotsc \\
								      0 & 0 & \dotsc & \dotsc & \phi_{n-1}[2-p_{n-1}-p_n] & A_{n-1}[1-p_n] & 0\\
								      0 & 0 & \dotsc & \dotsc & 0 & \phi_n[1-p_n]& A_n 
\end{smallmatrix} \right).
\end{equation*}
Therefore the curvature is
\begin{equation*}
 \varkappa =
\left(\begin{smallmatrix}  0  & 0 & \dotsc &0 & 0 & 0 \\
								      (-1)^{n-1-\sum_2^{n}p_i} d\phi_1[n-\sum_1^np_i] & 0 & \dotsc & 0 & 0 &0\\
								      \phi_2 \circ \phi_1[n-\sum_1^n p_i] & \dotsc & \dotsc & 0&0&0\\
								      \dotsc  & \dotsc & \dotsc & \dotsc & \dotsc & \dotsc \\
								      0  & 0 & \dotsc & (-1)^{1-p_n}d\phi_{n-1}[2-p_{n-1}-p_n] & 0 & 0\\
								      0 & 0 & \dotsc & \phi_n \circ \phi_{n-1}[2-p_{n-1}-p_n] & d\phi_n[1-p_n]& 0
\end{smallmatrix} \right).
\end{equation*}
We call the holonomy of this connection $\Psi^{\phi_n \otimes \dotsc \otimes \phi_1}$.  Using the modified Chen formula (eqn. \ref{ChenMod}), we consider the $n+1,1$-component of the differential of this form:
\begin{equation}
 d\Psi^{\phi_n \otimes \dotsc \otimes \phi_1}_{n+1,1}\\ 
 = [\sum_{i,j}\int\omega^i \varkappa \omega^j]_{n+1,1} -p_1^* \omega_{n+1,1}^0 \circ \Psi^{\phi_n \otimes \dotsc \otimes \phi_1}_{n+1,1} + \Psi^{\phi_n \otimes \dotsc \otimes \phi_1}_{n+1,1} \circ p_0^*\omega_{n+1,1}^0.
\end{equation}
Considering the terms in the first part of the sum, (the shifting is suppressed for clarity)
\begin{multline*}
 \sum_{i,j}[\int \omega^i \varkappa \omega^j]_{n+1,1} =  \sum_{k=1}^{n-1}\biggl[\\
\sum_{(i_0,\dotsc,\hat{i_{k}},\dotsc,i_n)} \int A_n^{i_n} \phi_n A_{n-1}^{i_{n-1}} \phi_{n-1} \dotsc \phi_{k+2} A_{k+1}^{i_{k+1}} (\phi_{k+1} \circ \phi_k) A_{k-1}^{i_{k-1}} \phi_{k-1} \dotsc \phi_1 A_0^{i_0}\biggr] \\
 + \sum_{k=1}^{n} \sum_{(i_0,\dotsc,i_n)} \int A_{n}^{i_n} \phi_n A_{n-1}^{i_{n-1}} \phi_{n-1} \dotsc \phi_{k+1} A_{k}^{i_{k}} d\phi_k A_{k-1}^{i_{k-1}} \phi_{k-1} \dotsc \phi_1 A_0^{i_0}.
\end{multline*}
Now, we can recognize inside this series the terms of
\begin{equation*}
 \Psi^{\phi_n \otimes \dotsc \otimes d\phi_k \otimes \dotsc \otimes \phi_1}_{n+1,1}\,\, \text{and} \,\, \Psi^{\phi_n \otimes \dotsc \otimes \phi_{k+1} \circ \phi_{k} \otimes \dotsc \otimes \phi_1}_{n+1,1}.
\end{equation*}
These are the holonomies of the cone $D^{\phi_n \otimes \dotsc \otimes d\phi_k \otimes \dotsc \otimes \phi_1}= d-\omega$ with
\begin{equation*}
\omega =
 \left(\begin{smallmatrix}  A_0[n-\sum_{1}^np_i+1] & 0 &\dotsc &\dotsc & 0 & 0 & 0 \\
								      \phi_1[n-\sum_{1}^n p_i+1] & \dotsc & \dotsc & \dotsc & 0 & 0 &0\\
								      \dotsc & \dotsc & \dotsc & \dotsc & \dotsc & \dotsc & \dotsc \\
								      0 & \dotsc & d\phi_k[n-k+1-\sum_k^{n} p_i+1] & A_k[n-k+1-\sum_{k+1}^{n} p_i] & \dotsc  & 0 & 0 \\
								      \dotsc & \dotsc & \dotsc & \dotsc & \dotsc & \dotsc & \dotsc \\
								      0 & 0 & \dotsc & \dotsc & \dotsc & A_{n-1}[1-p_n] & 0\\
								      0 & 0 & \dotsc & \dotsc & 0 & \phi_n[1-p_n]& A_n 
\end{smallmatrix} \right).
\end{equation*}
As before, shifting the terms to the right of $d\phi_k$ has the effect of changing the sign on all of the $A$'s to the left of $d\phi_k$ and leaving the sign on the $\phi's$ unchanged.  But in doing so we change the degree of $d\phi_k$ by one, thus introducing a sign change for every term to the right of $d\phi_k$ and one more from the alternation of the integral.  So the total change is $(-1)^{n-k+1}$.  But the sign in the definition of the iterated integral again accounts for this $n-k$.  All together we have computed,
\begin{multline}\label{TwoOneComp}
d\Psi^{\phi_n \otimes \dotsc \otimes \phi_1}_{n+1,1} = -\sum_{k=1}^{n-1}(-1)^{n-k-1-\lvert \phi_n \otimes \dotsc \otimes \phi_{k+2} \rvert}\Psi^{\phi_n \otimes \dotsc \otimes \phi_{k+1} \circ \phi_{k} \otimes \dotsc \otimes \phi_1}_{n+1,1}\\
 -\sum_{k=1}^{n} (-1)^{n-k- \lvert \phi_n \otimes \dotsc \otimes \phi_{k+1}\rvert}\Psi^{\phi_n \otimes \dotsc \otimes d\phi_k \otimes \dotsc \otimes \phi_1}_{n+1,1}\\
  -p_1^* A_n^0 \circ \Psi^{\phi_n \otimes \dotsc \otimes \phi_1}_{n+1,1} + (-1)^{\lvert \phi_n \otimes \dotsc \otimes \phi_n \rvert}\Psi^{\phi_n \otimes \dotsc \otimes \phi_1}_{n+1,1} \circ p_0^*A_0^0.
\end{multline}

\subsection{Cubes to Simplices}\label{sec:3.7}

Now we want to integrate over simplices rather than cubes, which will involve realizing any simplex as a family of paths with fixed endpoints.  This construction is essentially due to Adams \cite{MR0090045}.  It was modified for use in the differentiable setting by Chen, e.g. \cite{MR0454968}, and is described in a detailed manner by Igusa in (arXiv:0912.0249v1).  We only outline it here, citing the relevant properties.  Throughout the section $P$ is the path space functor.

Given a geometric $k$-simplex, $\sigma \colon \Delta^k \rightarrow M$, we want to realize this as a factor of a $(k-1)$-family of paths into $M$.  That is, we produce a map $\theta \colon I^k \rightarrow \Delta^k$ which then can be viewed as a family of paths $\theta_{(k-1)}\colon I^{k-1} \rightarrow P\Delta^k$.  This map is factored into two parts,
\begin{equation*} 
I^k \xrightarrow{\lambda} I^k \xrightarrow{\pi_k} \Delta^k.
\end{equation*}  
Here $\pi_k$ is an order-preserving retraction.  $\lambda$ is given by the map $\lambda_w \colon I \rightarrow I^k$ parametrized by $w \in I^{k-1}$.  The result, is an $I^{k-1}$-family of paths in $I^k$ (we call this $\lambda_{(k-1)}\colon I^{k-1} \rightarrow PI^{k}$) each starting at $(w_1,w_2,\dotsc,w_{k-1},1)$ and ending at $(0,0,\dotsc,0)$.  When post-composed with $\pi_k$ we get a $(k-1)$-family of paths in $\Delta^k$ which start at $\sigma_k$ and end at $\sigma_0$.  Define $\theta_{(k-1)}\colon I^{k-1} \rightarrow P\Delta^k$ by $P\pi_k \circ \lambda_{(k-1)}.$ \\
We restate the characteristic properties of such a factorization c/o Igusa:

\begin{tabular}{cl}
$\bullet$ & If $x\leq X^{\prime}$ in the sense that $x_i \leq x^{\prime}_i$ for all i, then $\pi_k(x) \leq \pi_k(x^{\prime})$.\\  		          & Furthermore, $\pi_k(x) \geq x$. \\
$\bullet$ & $\pi_k$ sends $\partial_i^+I^k = \{x \in I^k \lvert x_i = 1\}$ to the back $k-i$ face of $\Delta^k$.\\ 
          & This face is spanned by $\{ v_i, \dotsc, v_k \}$ and given by the equation $y \geq v_i$.\\
$\bullet$ & $\pi_k$ sends $\partial_i^-I^k = \{ x \in I^k \lvert x_i = 0\}$ onto $\partial_i \Delta^k = \{ y\in \Delta^k \lvert y_i = y_{i+1}\}$. \\
\end{tabular}\newline
and,

\begin{tabular}{cl}
 $\bullet$ & The adjoint of $\theta_{(k)}$ is a piecewise-linear epimorphism $I^k \twoheadrightarrow \Delta^k.$\\
 $\bullet$ & For each $w \in I^{k-1}$, $\theta_w$ is a path from $\theta_w(0) = v_k$ to $\theta_w(1) = v_0$.\\
 $\bullet$ & $\theta_w$ passes through the vertex $v_i$ iff $w_i = 1$.\\
 $\bullet$ & $\theta_{(k)}$ takes each of the $2^{k-1}$ vertices of $I^{k-1}$ to the shortest path\\
           & from $v_k$ to $v_0$ passing through the corresponding subset\\
	   & $\{ v_1 \dotsc v_{k-1} \}$.\\
\end{tabular}

\section{An $A_{\infty}$-quasi-equivalence}\label{sec:4}
In this section we establish our Riemann--Hilbert correspondence for infinity-local systems.  Recall that $\Cal$ is the dg-category of cochain complexes over $\R.$
\begin{Thm}\label{maintheorem}
There is an $A_{\infty}$-functor,
\[
\mathcal{RH}\colon\Perf\to \mathsf{Loc}^\Cal_\infty(\Pinf),
\]
which is a quasi-equivalence.
\end{Thm}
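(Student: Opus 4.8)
\medskip

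\emph{Overview and construction of $\RH$.} The plan is to build $\RH$ out of Igusa's higher holonomy transport, check that it takes values in $\infty$-local systems and is a dg-functor of pre-triangulated dg-categories, then prove it is quasi-fully faithful by a spectral-sequence comparison and quasi-essentially surjective by a sheaf-theoretic argument following \cite{B1}. Given $(E^{\bullet},\E)\in\Perf$, pick a trivializing cover and write $\E=d-\omega$ locally with $\omega=A^0+A^1+\dots$ of total degree $1$, and let $\Psi=I+\int\omega+\int\omega\omega+\dots$ be the associated $\Z$-graded holonomy on $PM$. On objects one sets $\RH(E,\E)(x):=(E_x,A^0_x)$ and, using the cubes-to-simplices factorization $\theta_{(k-1)}:I^{k-1}\to P\Delta^k$, one defines $f^k(\sigma_k):=\pm\int_{I^{k-1}}(\sigma\circ\theta)^\ast\Psi_{k-1}$. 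On a morphism $\phi\in\mathrm{Hom}^q_{\Perf}(E_1,E_2)$ one forms the cone $C(\phi)$, with its $\Z$-connection $D^\phi=d-\omega$ that is flat exactly when $\phi$ is closed, and lets $\RH(\phi)$ be the $21$-block $\Psi^\phi_{21}$ of its holonomy, again integrated over the cubes $I^{k-1}$.

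\emph{Maurer--Cartan and functoriality.} The first thing to verify is that $\RH(E,\E)$ satisfies $\hat\delta f+df+f\cup f=0$. This is precisely the integral form of the curvature-zero identity (\ref{keystone}), namely $-A^0_{x_0}\circ\int_{I^q}h^\ast\Psi_q+(-1)^q(\int_{I^q}h^\ast\Psi_q)\circ A^0_{x_1}=\int_{\partial I^q}h^\ast\Psi_{q-1}$, combined with Stokes' theorem on $\partial I^{k-1}$: under $\pi_k$ the three types of codimension-one face of the cube account respectively for the internal-face term $\hat\delta f$, the path-splitting term $f\cup f$, and the endpoint term $df$. Similarly, $D\,\RH(\phi)=\RH(d\phi)$ together with compatibility with composition and with $\mathrm{id}$ follows from the holonomy-of-a-cone identity $d\Psi^\phi_{21}=-\Psi^{d\phi}_{21}-p_0^\ast B^0\circ\Psi^\phi_{21}+\Psi^\phi_{21}\circ p_1^\ast A^0$ established above, after the same integration and Stokes bookkeeping; compatibility with the shift and with the cone is immediate, since a length-$k$ iterated integral carries $k$ hidden copies of $\Xi$ and a shift by $q$ then contributes the overall sign $(-1)^q$ demanded in $\Loc$.

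\emph{Quasi-full-faithfulness.} Fix $E_1,E_2\in\Perf$. The form-degree filtration on $\mathrm{Hom}^\bullet_{\Perf}(E_1,E_2)$ and the filtration $F^\bullet$ of the statement preceding (\ref{SS}) are exhaustive and bounded (bounded because the underlying complexes are), and $\RH$ preserves them, so it induces a map of the two spectral sequences. On $E_0$ both differentials reduce to $\phi\mapsto d_{E_2}\circ\phi-(-1)^{|\phi|}\phi\circ d_{E_1}$ with $d_{E_i}=A^0$, so on $E_2$ both sides compute the cohomology of $M$ with coefficients in the ordinary local system $\mathcal H^\bullet\underline{\mathrm{Hom}}(E_1,E_2)$ — de Rham cohomology on the $\Perf$ side, smooth singular cohomology on the other (cf. Corollary \ref{perfect}) — and $\RH$ realizes the de Rham comparison isomorphism by integration of forms over smooth simplices. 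Since both spectral sequences converge, $\RH$ is a quasi-isomorphism on $\mathrm{Hom}$-complexes.

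\emph{Quasi-essential surjectivity and the main obstacle.} Given $(F,f)\in\mathsf{Loc}^\Cal_\infty(\Pinf)$, assign to each open $U\subseteq M$ the complex $\mathsf{Loc}^\Cal_\infty(\pi_\infty U)(\R,F|_U)$; the sheafification $\mathcal F^\bullet$ of this complex of presheaves is, by a partition-of-unity/acyclicity argument, quasi-isomorphic to $(\mathcal E^\bullet\otimes_\R C^\infty_M)\otimes_{\A^0}\A^\bullet$ for a bounded complex $\mathcal E^\bullet$ of $\R$-sheaves. Using Illusie's strictification theorem one replaces $\mathcal E^\bullet\otimes_\R C^\infty_M$ by a quasi-isomorphic bounded complex $E^\bullet$ of finitely generated projective $\A^0$-modules, whose differential is the desired $0$-component $A^0$; following \cite{B1} one then extends $A^0$ inductively to a flat $\Z$-connection $\E$ on $E^\bullet$, so that $(E^\bullet,\E)\in\Perf$, and constructs a closed degree-$0$ morphism $\RH(E^\bullet,\E)\to(F,f)$ whose $0$-component is a stalkwise quasi-isomorphism; by Proposition \ref{HE} it is a homotopy equivalence, so $(F,f)$ lies in the essential image. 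The hard part is exactly this last step: the passage from the sheaf $\mathcal F^\bullet$ to a genuine finite-rank $\Z$-graded bundle carrying a \emph{flat} $\Z$-connection — the strictification, where Illusie's theorem is indispensable, and the inductive construction of the higher components $A^1,A^2,\dots$ together with the verification that the resulting perfect complex maps back to $(F,f)$ by a homotopy equivalence. By contrast the holonomy side, though sign-intensive, is a routine application of Chen's formulas and Stokes' theorem.
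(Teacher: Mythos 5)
Your proposal reconstructs essentially the same proof as the paper: the object map via Igusa/Chen higher holonomy pulled back through the cubes-to-simplices factorization, the morphism map as the $21$-block of the cone holonomy, quasi-full-faithfulness via the map of form-degree spectral sequences and the de Rham theorem for local systems at the $E_2$-page, and quasi-essential surjectivity by passing through the soft sheaf $\underline{C}_F$, tensoring with $\underline{\A}_M$, invoking the SGA6/Illusie equivalence for soft ringed spaces to get a strictly perfect complex, and then running the Toledo--Tong/O'Brian-style inductive construction of the higher components of the $\Z$-connection and of the quasi-isomorphism, concluding with Proposition \ref{HE}. The decomposition of steps, the key lemmas invoked, and the role of Stokes' theorem in translating the flatness identity into the Maurer--Cartan equation all match the paper's argument.
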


Recall that a dg-category is a special case of an $A_\infty$-category. An $A_\infty$-functor $F$ (with components $\{F_i\}$ indexed by valence) between two dg-categories $A$ and $B$ will satisfy the $A_\infty$-condition,
\begin{equation*}
 \sum_{a+b=k}\mu_B \circ F_a \otimes F_b +  F_k \circ d_A = d_B \circ F_k + F_{k-1} \circ \sum_{i+j+2=k}[I^{\otimes i} \otimes \mu_A \otimes I^{\otimes j}], 
\end{equation*}
in which $\mu$ represents the multiplications and $d$ the respective differentials in these dg-categories.  This is written succinctly on account of the abundance of surveys of $A_\infty$-structures in the literature ---see \cite{MR2441780} for more details.

\subsection{The Functor $\RH$}\label{sec:4.1}

On objects the functor 
\begin{equation*}
\RH_0\colon \Ob(\Perf) \rightarrow \Ob(\mathsf{Loc}_{\infty}^{\Cal}(\Pinf))
\end{equation*}
is described as follows.  Given an element $(E^{\bullet},\E) \in \Perf$ take the corresponding graded bundle $V$ over $M$ with a $\Z$-graded connection $\E$.  Define an infinity-local system by the assignments,
\begin{align*}
\RH_0((E^{\bullet},\E))(x) &:= (V_x,\E^0_x),\\
\RH_0((E^{\bullet},\E))(\sigma_k) &:= \int_{I^{k-1}} (-1)^{(k-1)(K\Psi)}\theta^*_{(k-1)}(P\sigma)^*\Psi.
\end{align*}
That is, assign to each $k$-simplex the integral of the higher holonomy integrated over that simplex (understood as a $(k-1)$-family of paths).  The result is a degree-$(1-k)$ homomorphism from the fiber over the endpoint to the fiber over the starting point of the simplex.  To a $0$-simplex this yields a degree $1$ map in the fiber over that point which we shall see will be a differential as a result of the flatness of the $\Z$-graded connection.  To a $1$-simplex (a path) we get the usual parallel transport of the underlying graded connection.  Flatness will imply that this is a cochain map with respect to the differentials on the fibers over the endpoints of the path.
 
So far we only have a simplicial set map from $\Pinf$ to a simplicial set consisting of simplices in $\Cal$ which are not necessarily homotopy coherent.  Call this map $F$.  Since we are integrating a flat $\Z$-graded connection, the holonomy $\Psi$ satisfies,
\begin{equation*}
d\Psi = -p_0^*A^0 \circ \Psi + \Psi \circ p_1^*A^0.
\end{equation*}
Via Stokes' Theorem, $F$ satisfies the local system condition,
\begin{equation}
\begin{gathered}
 \E^0 \circ F_k(\sigma) -(-1)^kF_k(\sigma)\circ \E^0 = \sum_{i=1}^{k-1}(-1)^{i}F_{k-1}(\sigma_0,\dotsc,\hat{\sigma_i},\dotsc,\sigma_k)+\\
-\sum_{i=1}^{k-1}(-1)^{i}F_{i}(\sigma_0,\dotsc,\sigma_i)\circ F_{k-i}(\sigma_i,\dotsc,\sigma_k),\\\\
\end{gathered}
\end{equation}
which is the same as the required Maurer--Cartan/twisting-cochain condition,
\begin{equation*}
 dF + \hat{\delta} F + F \cup F = 0.
\end{equation*}
Proving this relation amounts to the task of figuring out what $\int_{\partial I^{q-1}}h^*\Psi$ is in the case that h is the map constructed above which factors through $\sigma$.  That is we must relate $\partial I^{q-1}$ to $\partial \Delta^{q-1}$.  Igusa works this out elegantly in his preprint and obtains (If we write $I(\sigma_k):= \int (-1)^{(k-1)K\Psi} \theta^*(P[\sigma_k])^*(\Psi)$),
\begin{equation*}
\int (-1)^{(k-1)K(d\Psi)}\theta^*(P[\sigma_k])^*(d\Psi) = -\hat{\delta} I - I \cup I.
\end{equation*}

Now we can describe the map $\RH_n$,
\begin{equation*}
 \RH_n \colon \Perf(E_{n-1},E_{n})\otimes \dotsc \otimes \Perf((E_{0},E_1))
 \rightarrow \Loc(X)(\RH_0(E_0),\RH_0(E_n))[1-n],
\end{equation*}
on composable $n$-tuples of morphisms.  Given a tuple $\phi_n \otimes \dotsc \otimes \phi_1$, assign to it the generalized homological cone (Defn. \ref{GenHomCone}), and its associated holonomy $\Psi^{\phi_n \otimes \dotsc \otimes \phi_1}$.  Then define
\begin{equation*}
 \RH_n(\phi_n \otimes \dotsc \otimes \phi_1)(\sigma_k) := \RH_0(C(\phi_n \otimes \dotsc \otimes \phi_1)(\sigma_k))_{n+1,1}.
\end{equation*}
Note that applying $\RH_0$ to the cone $C(\phi_n \otimes \dotsc \otimes \phi_1)$ does not necessarily yield an infinity local system.  $\RH_0$ is perfectly well-defined as a holonomy map on any $\Z$-connection regardless of flatness.  Flatness implies that the image is an infinity-local system.

\begin{Thm}
 The maps $\{\RH_i\}$ define an $A_\infty$-functor
\begin{equation*}
\RH\colon \Perf \rightarrow  \mathsf{Loc}_{\infty}^{\Cal}(\pi_{\infty}).
\end{equation*} 
\end{Thm}
\begin{proof}
Given a tuple of morphisms $\phi := \phi_n \otimes \dotsc \otimes \phi_1 \in \Perf(E_{n-1},E_{n})\otimes \dotsc \otimes \Perf(E_{0},E_1)$, denote the holonomy associated to the generalized homological cone $C(\phi)$ by $\Psi^{\phi_n \otimes \dotsc \otimes \phi_1}$.  Locally write $D^\phi = d-\omega$.

We already calculated that (on $PM(x_0,x_1)$), (eqn. \ref{TwoOneComp})
\begin{multline*}
 -d\Psi^{\phi_n \otimes \dotsc \otimes \phi_1}_{n+1,1} -p_0^* \omega_{n+1,1}^0 \circ \Psi^{\phi_n \otimes \dotsc \otimes \phi_1}_{n+1,1} + \Psi^{\phi_n \otimes \dotsc \otimes \phi_1}_{n+1,1} \circ p_1^*\omega_{n+1,1}^0= \\
  \shoveleft{=\sum_{k=1}^{n-1}(-1)^{n-k-1-\lvert \phi_n \otimes \dotsc \otimes \phi_{k+2} \rvert}\Psi^{\phi_n \otimes \dotsc \otimes \phi_{k+1} \circ \phi_{k} \otimes \dotsc \otimes \phi_1}_{n+1,1}} + \sum_{k=1}^{n} (-1)^{n-k-\lvert \phi_n \otimes \dotsc \otimes \phi_{k+1} \rvert}\Psi^{\phi_n \otimes \dotsc \otimes d\phi_k \otimes \dotsc \otimes \phi_1}_{n+1,1}.
\end{multline*}
Thus, applying $\int (-1)^{K(\Psi)J(\bullet)}\theta^*(P[\bullet])^*(\Psi)$ to both sides yields,
\begin{multline*}
\biggl [\RH_0(C(\phi)) \cup \RH_0(C(\phi)) +\hat{\delta}\RH_0(C(\phi)) + d\RH_0(C(\phi))\biggr ]_{n+1,1} =\\
 \shoveleft{= \sum_{k=1}^n(-1)^{n-k-\lvert \phi_n \otimes \dotsc \otimes \phi_{k+1} \rvert} \RH_{n-1}(\phi_n \otimes \dotsc \otimes d\phi_k \otimes \dotsc \phi_1)}\\
 + \sum_{k=1}^{n-1}(-1)^{n-k-1-\lvert \phi_n \otimes \dotsc \otimes \phi_{k+2} \rvert} \RH_{n-1}(\phi_n \otimes \dotsc \otimes (\phi_{k+1} \circ \phi_{k}) \otimes \dotsc \otimes \phi_1).
\end{multline*}
Observe (denoting $\phi_k \otimes \dotsc \otimes \phi_l$ by $\phi_{k,l}$),
\begin{multline*}
\biggl [\RH_0(C(\phi))\cup\RH_0(C(\phi))\biggr ]_{n+1,1}=\\
\shoveleft{=\sum_{i+j=n}\RH_0(C(\phi_{n,i+1}))_{j+1,1} \cup \RH_0(C(\phi_{i,1}))_{i+1,1}[(j-\sum_{i+1}^np_k)]}\\
\shoveleft{+ \RH_0(E_n) \cup \RH_0(C(\phi))_{n+1,1} + \RH_0(C(\phi))_{n+1,1}\cup \RH_0(E_0[n-\lvert \phi \rvert])}\\
\\ 
\shoveleft{=\sum_{i+j=n}(-1)^{(j-\sum_{i+1}^np_k)}\RH_j(\phi_{n,i+1}) \cup \RH_i(\phi_{i,1})}\\
+ \RH_0(E_n) \cup \RH_0(C(\phi))_{n+1,1}\\
+ (-1)^{n-\lvert \phi \rvert}\RH_0(C(\phi))_{n+1,1} \cup \RH_0(E_0).
\end{multline*}
By definition,
\begin{multline*}
 D_{\Loc(X)}(\RH_n(\phi)) =\\
 \RH_0(E_n) \cup \RH_n(\phi) + (-1)^{n- \lvert \phi \rvert}\RH_n(\phi) \cup \RH_0(E_0) +\hat{\delta}\RH_n(\phi) + d\RH_n(\phi).
\end{multline*}
Hence we get, 
\begin{multline*}
\sum_{i+j=n}(-1)^{j-\rvert \phi_n \otimes \dotsc \otimes \phi_{i+1} \rvert}\RH_j(\phi_n \otimes \dotsc \otimes \phi_{i+1}) \cup \RH_i(\phi_{i}\otimes \dotsc \otimes \phi_1) + D(\RH_n(\phi_n \otimes \dotsc \otimes \phi_1))\\
 \shoveleft{= \sum_{k=1}^n (-1)^{n-k-\lvert \phi_n \otimes \dotsc \otimes \phi_{k+1}\rvert }\RH_{n}(\phi_n \otimes \dotsc \otimes d\phi_k \otimes \dotsc \otimes \phi_1)}\\
 +\sum_{k=1}^{n-1} (-1)^{n-k-1 - \lvert \phi_n \otimes \dotsc \otimes \phi_{k+2} \rvert}\RH_{n-1}(\phi_n \otimes \dotsc \otimes \phi_{k+1} \circ \phi_k \otimes \dotsc \otimes \phi_1).
\end{multline*}
These are the $A_\infty$-relations for an $A_\infty$-functor between two dg-categories understood as $A_\infty$-categories.
\end{proof}
\begin{Prop}
The functor $\RH$ is $A_\infty$-quasi-fully faithful. 
\end{Prop}
\begin{proof} Consider two objects  $E_i=(E_i^\bullet,\mathbb{E}_i)\in \Perf$, $i=1,2$. The chain map, 
\[
\RH_1\colon\Perf(E_1,E_2)\to \mathsf{Loc}^\Cal_\infty(\Pinf)(\RH_0(E_1),\RH_0(E_2)),
\]
induces a map on spectral sequences (prop. \ref{SS}) and \cite{MR2648899}, Theorem 2.5.1. At the $E_1$-level on the $\Perf$ side, we have that $H^*((E_i,\mathbb{E}_i^0))$ are both vector bundles with flat connection, while according to corollary \ref{perfect}, we have $H^*((\RH(E_i),\E^0_i))$ are local systems on $M$. At the $E_2$-term the map is
\begin{equation*}
H^*(M; \Hom(H^*(E_1,\mathbb{E}_1^0),H^*(E_2,\mathbb{E}_2^0)))\to H^*(M;H^*((\RH(E_1),\E^0_1)),H^*((\RH(E_2),\E^0_2))),
\end{equation*}
which is an isomorphism by the ordinary De Rham theorem for local systems. 
\end{proof}

\subsection{$\RH$ is $A_\infty$-essentially surjective}\label{sec:4.2}
We must prove that for any $(F,f)\in \mathsf{Loc}_{\infty}^{\Cal}(\Pinf) $, that there is an 
object $E=(E^\bullet,\mathbb{E})\in \Perf$ such that $\RH_0(E)$ is quasi-isomorphic to $(F,f)$. 
We first define a complex of sheaves on $M$. Let $\underline{\mathbb{R}}$ denote the constant local system, 
and thus an infinity-local system. We also view $\underline{\mathbb{R}}$  as a sheaf of rings with which 
$(M,\underline{\mathbb{R}})$  becomes a ringed space. For an open subset $U\subset M$, let
 $(C_F(U),D)=(\mathsf{Loc}_{\infty}^{\Cal}(\pi_\infty U)(\underline{\mathbb{R}}|_U,F|_U),D)$. Let $(\underline{C}_F,D)$ 
 denote the associated complex of sheaves. Then  $\underline{C}_F$ is soft; see the proof of Theorem 3.15,  \cite{MR0515872}.  
 By corollary \ref{perfect}, $\underline{C}_F$ is a perfect complex of sheaves over $\underline{\mathbb{R}}$.  
 Let $\underline{\A}_M$ denote the sheaf of $C^\infty$ functions and $(\underline{\A}^\bullet, d)$ denote 
 the dg sheaf of $C^\infty$ forms on $M$. Set $\underline{C}_F^\infty =
\underline{C}_F\otimes_{\underline{\mathbb{R}}}\underline{\A}_M$. By the flatness of $\underline{\A}_M$ over
 $\underline{\mathbb{R}}$, $\underline{C}_F^\infty$ is perfect as a sheaf of $\underline{\A}_M$-modules. Now the map
 \[(\underline{C}_F^\bullet,D)\to (\underline{C}_F^\infty\otimes_{\underline{\A}_M}\underline{\A}_M^{\bullet},D\otimes 1+1\otimes d),\]
 is a quasi-isomorphism of sheaves of $\underline{\mathbb{R}}$-modules by the flatness of $\underline{\A}_M$ over 
 $\underline{\mathbb{R}}$. 
 
 \noindent We need the following proposition. 
 \begin{Prop}\label{SoftStuff}
Suppose $(X,\underline{\mathcal{S}}_X)$ is a ringed space, where $X$ is compact and $\underline{\mathcal{S}}_X$ is a soft sheaf of rings. Then 
\begin{enumerate}
\item The global sections functor 
\[
\Gamma\colon\text{Mod-}\underline{\mathcal{S}}_X\to \text{Mod-}\underline{\mathcal{S}}_X(X)
\]
is exact and
establishes an equivalence of categories between the category of sheaves of right $\underline{\mathcal{S}}_X$-modules and the category of right modules over the global sections $\underline{\mathcal{S}}_X(X)$.
\item If $\underline{M}\in\text{Mod-}\underline{\mathcal{S}}_X$ locally has finite resolutions by finitely generated free $\underline{\mathcal{S}}_X$-modules, then $\Gamma(X;\underline{M})$ has a finite resolution by finitely generated projectives. 
\item The derived category of perfect complexes of sheaves $D_{\mbox{perf}}(\text{Mod-}\underline{\mathcal{S}}_X)$ is equivalent 
the derived category of perfect complexes of modules \newline $D_{\mbox{perf}}(\text{Mod-}\underline{\mathcal{S}}_X(X))$.
\end{enumerate}
\end{Prop}
\begin{proof} See Proposition 2.3.2, Expos\'{e} II, SGA6, \cite{MR0354655}. \end{proof}

 \begin{Thm}
 The functor, 
 \[
 \RH\colon\Perf\to \mathsf{Loc}_\infty^\Cal(\Pinf),
 \]
 is $A_\infty$-essentially surjective.
 \end{Thm}
 \begin{proof}  By the proposition, there is a (strictly) perfect complex $(E^\bullet, \mathbb{E}^0)$
 of $\A$-modules along with a quasi-isomorphism
 \begin{equation*}
 e^0\colon(E^\bullet,\mathbb{E}^0)\to (X^\bullet, \mathbb{X}^0) :=(\Gamma(M,\underline{C}_F^\infty), D).  
 \end{equation*}
 Following the argument of Theorem 3.2.7 of \cite{MR2648899}, which in turn is based on arguments from \cite{MR506654}, we construct the higher components $\mathbb{E}^i$ of a $\mathbb{Z}$-graded connection along with the higher components of a morphism $e^i$ at the same time.   
  
We have a $\mathbb{Z}$-graded connection on $X^\bullet$ by
\[
\mathbb{X}:=D\otimes 1+1\otimes d\colon X^\bullet \to X^\bullet\otimes_\A\A^\bullet.
\]
Then we have an induced connection 
\[
\mathbb{H}\colon H^k(X^\bullet,\mathbb{X}^0)\to H^k(X^\bullet,\mathbb{X}^0)\otimes_\A\A^1
\]
for each $k$. 
We use the quasi-isomorphism $e^0$ to transport this connection to a connection, also denoted by $\mathbb{H}$ on $H^k(E^\bullet;\mathbb{E}^0),$
\begin{equation*}
\begin{array}{ccc}
 H^k(E^\bullet;\mathbb{E}^0)& \stackrel{\mathbb{H}}{\to} & H^k(E^\bullet,\mathbb{E}^0)\otimes_\A\A^1 \\
\downarrow e^0 & & \downarrow e^0\otimes 1 \\
H^k(X^\bullet,\mathbb{X}^0)& \stackrel{\mathbb{H}}\longrightarrow &  H^k(X^\bullet,\mathbb{X}^0)\otimes_\A\A^1
\end{array}\end{equation*}
The right vertical arrow above $e^0\otimes 1$ is a quasi-isomorphism because $\A^\bullet$ is flat over $\A$. 
The first step is handled by the following lemma. 
\begin{Lem}
Given a bounded complex of f.g. projective $\A$ modules $(E^\bullet,\mathbb{E}^0)$ with connections
\begin{equation*}
\mathbb{H}\colon H^k(E^\bullet;\mathbb{E}^0)\to H^k(E^\bullet,\mathbb{E}^0)\otimes_\A\A^1,
\end{equation*}
for each $k$, there exist connections
\[
\widetilde{\mathbb{H}}\colon E^k\to E^k\otimes_\A\A^1
\]
lifting $\mathbb{H}$. That is,  
\[
\widetilde{\mathbb{H}}\mathbb{E}^0=(\mathbb{E}^0\otimes 1)\widetilde{\mathbb{H}},
\]
and the connection induced on the cohomology is $\mathbb{H}$.
\end{Lem}
\begin{proof}(of lemma) 
Since $E^\bullet$ is a bounded complex of $\A$-modules it lives in some bounded range of degrees $k\in [N,M]$. Pick an arbitrary connection on $E^M$, $\nabla$. Consider the diagram with exact rows
\begin{equation*}\label{CD1.5}
\begin{array}{lclc}
  E^{M} &\stackrel{j}{ \to} & H^M(E^\bullet,\mathbb{E}^0)&\to 0 \\
    \nabla\downarrow & \stackrel{\theta}{\searrow} &  \mathbb{H}\downarrow &\\
       & & & \\
E^{M}\otimes_\A\A^1 &\stackrel{j\otimes 1}{ \to} & H^M(E^\bullet,\mathbb{E}^0)\otimes_\A\A^1  &\to 0
\end{array}
\end{equation*}
In the diagram, $\theta=\mathbb{H}\circ j-(j\otimes 1)\circ \nabla$ is easily checked to be $\A$-linear and  $j\otimes 1$ is surjective by the right exactness of tensor product. By the projectivity of $E^M$, $\theta$ lifts to 
\[
\widetilde{\theta}\colon E^M\to E^M\otimes_\A\A^1,
\]
so that $(j\otimes 1)\widetilde{\theta}-\theta$. 
Set $\widetilde{\mathbb{H}}=\nabla+\widetilde{\theta}$. With $\widetilde{\mathbb{H}}$ in place of $\nabla$, the diagram above commutes.

Now choose on $E^{M-1}$ any connection $\nabla_{M-1}$. But $\nabla_{M-1}$ does not necessarily satisfy $\mathbb{E}^0\nabla_{M-1}=\widetilde{\mathbb{H}}\mathbb{E}^0=0$. We correct it as follows.  Set $\mu=\widetilde{\mathbb{H}}\mathbb{E}^0-(\mathbb{E}^0\otimes 1)\nabla_{M-1}$. Then $\mu$ is $\A$-linear. Furthermore, 
$\I \mu\subset \I \mathbb{E}^0\otimes 1$; this is because $\widetilde{\mathbb{H}}\mathbb{E}\in \I \mathbb{E}\otimes 1$ since $\widetilde{\mathbb{H}}$ lifts $\mathbb{H}$. So by  projectivity it lifts
to $\widetilde{\theta}\colon E^{M-1}\to E^{M-1}\otimes_\A\A^1$ such that $(\mathbb{E}^0\otimes 1)\circ\widetilde{\theta}=\theta$. Set $\widetilde{\mathbb{H}}\colon E^{M-1}\to E^{M-1}\otimes_\A\A^1$ to be $\nabla_{M-1}+\widetilde{\theta}$. Then $\mathbb{E}^0\widetilde{\mathbb{H}}=\widetilde{\mathbb{H}}\mathbb{E}^0$ in the right most square below. 
\begin{equation*}\label{CD2}
\begin{array}{cccccccc}
 & E^{N} & \mto{\mathbb{E}^0} & E^{N+1} &
\mto{\mathbb{E}^0} \cdots \mto{\mathbb{E}^0}& E^{M-1} & \mto{\mathbb{E}^0} & E^M\\
                       &   &  &  &
            & \nabla_{M-1}\downarrow & \stackrel{\mu}{\searrow} &  \widetilde{\mathbb{H}}\downarrow \\
 & E^{N}\otimes_\A\A^1 &
\mto{\mathbb{E}^0\otimes 1} & E^{N+1}\otimes_\A\A^1 &
\mto{\mathbb{E}^0\otimes 1} \cdots \mto{\mathbb{E}^0\otimes 1} & E^{M-1}\otimes_\A\A^1 & \mto{\mathbb{E}^0\otimes 1} & E^M\otimes_\A\A^1
\end{array}
\end{equation*}
Now we continue backwards to construct all $\widetilde{\mathbb{H}}\colon E^\bullet\to E^\bullet\otimes_\A\A^1$ satisfying $(\mathbb{E}^0\otimes 1)\widetilde{\mathbb{H}}=\widetilde{\mathbb{H}}\mathbb{E}^0=0$. This completes the proof of the lemma.
\end{proof}
\noindent (Proof of the theorem, continued.) Set $\widetilde{\mathbb{E}}^1=(-1)^k \widetilde{\mathbb{H}}$ on $E^k$. Then
\[
\mathbb{E}^0\widetilde{\mathbb{E}}^1+\widetilde{\mathbb{E}}^1\mathbb{E}^0=0,
\] 
but it is not necessarily true that $e^0\widetilde{\mathbb{E}}^1-\mathbb{X}^1 e^0=0$. We correct this as follows. Consider $\psi=e^0\widetilde{\mathbb{E}}^1-\mathbb{X}^1 e^0\colon E^\bullet\to X^\bullet\otimes_\A\A^1$. Check that $\psi$ is $\A$-linear and a map of complexes.
\begin{equation*}\begin{array}{cll}
 & & (E^\bullet\otimes_\A\A^1 , \mathbb{E}^0\otimes 1)\\
 &\stackrel{\widetilde{\psi}}{\nearrow} & \downarrow e^0\otimes 1 \\
 E^\bullet & \mto{\psi} & (X^\bullet\otimes_\A\A^1, \mathbb{X}^0\otimes 1)
 \end{array}
 \end{equation*}
 In the above diagram, $e^0\otimes 1$ is a quasi-isomorphism $e^0$ is a homotopy equivalence. So by Lemma 1.2.5 of \cite{MR506654} there is a lift $\widetilde{\psi}$ of $\psi$ and a homotopy $e^1\colon E^\bullet\to X^{\bullet-1}\otimes_\A\A^1$ between $(e^0\otimes 1)\widetilde{\psi}$ and $\psi$, 
 \[
 \psi-(e^0\otimes 1)\widetilde{\psi}=(e^1\mathbb{E}^0+\mathbb{X}^0 e^1).
  \]
Let $\mathbb{E}^1=\widetilde{\mathbb{E}}^1-\widetilde{\psi}$. Then
\begin{equation*}\label{ide}
\mathbb{E}^0\mathbb{E}^1+\mathbb{E}^1\mathbb{E}^0=0 \mbox{ and } e^0\mathbb{E}^1-\mathbb{X}^1 e^0=e^1\mathbb{E}^0+\mathbb{X}^0 e^1.\end{equation*}

So we have constructed the first two components $\mathbb{E}^0$ and $\mathbb{E}^1$ of the $\mathbb{Z}$-graded connection and the first components $e^0$ and $e^1$ of the quasi-isomorphism $E^\bullet\otimes_\A\A^\bullet \to  X^\bullet\otimes_\A\A^\bullet$. 

To construct the rest, consider the mapping cone $L^\bullet$ of $e^0$. Thus,
\[
L^\bullet=E[1]^\bullet\oplus X^\bullet.
\]
Let $\mathbb{L}^0$ be defined as the matrix
\begin{equation*}\mathbb{L}^0=\left(\begin{array}{cc} \mathbb{E}^0[1] & 0 \\
                                         e^0[1]  & \mathbb{X}^0 \end{array}\right).
\end{equation*}
Define $\mathbb{L}^1$ as the matrix
\begin{equation*}\mathbb{L}^1=\left(\begin{array}{cc} \mathbb{E}^1[1] & 0 \\
                                         e^1[1]  & \mathbb{X}^1 \end{array}\right).
\end{equation*}
Now $\mathbb{L}^0\mathbb{L}^0=0$ and $[\mathbb{L}^0,\mathbb{L}^1]=0$ express the identities \eqref{ide}.
Let 
\begin{equation*}
D=\mathbb{L}^1\mathbb{L}^1+\left(\begin{array}{cc} 0 & 0\\
                                                    \mathbb{X}^2 e^0 & [\mathbb{X}^0,\mathbb{X}^2]
                                                    \end{array}\right).
\end{equation*}
Then, as is easily checked, $D$ is $\A$-linear and  
\begin{enumerate}\item $[\mathbb{L}^0,D]=0$, and
\item $D|_{0\oplus X^\bullet}=0$.
\end{enumerate}
Since $(L^\bullet, \mathbb{L}^0)$ is the mapping cone of a quasi-isomorphism, it is acyclic and since $\A^\bullet$ is flat over $\A$, $(L^\bullet\otimes_\A\A^2, \mathbb{L}^0\otimes 1)$ is acyclic too. Since $E^\bullet$ is projective, we have that
\[
\Hom_\A^\bullet((E^\bullet,\mathbb{E}^0), (L^\bullet\otimes_\A\A^2,\mathbb{L}^0))
\]
is acyclic. Moreover, \[\Hom_\A^\bullet((E^\bullet,\mathbb{E}^0), (L^\bullet\otimes_\A\A^2,\mathbb{L}^0))\subset \Hom_\A^\bullet(L^\bullet,(L^\bullet\otimes_\A\A^2,[\mathbb{L}^0,\cdot]))\] is a subcomplex. Now we have $D\in \Hom_\A^\bullet(E^\bullet, L^\bullet\otimes_\A\A^2)$ is a cycle and so there is $\widetilde{\mathbb{L}}^2\in \Hom_\A^\bullet(E^\bullet, L^\bullet\otimes_\A\A^2)$ such that $-D=[\mathbb{L}^0,\widetilde{\mathbb{L}}^2]$. Define  $\mathbb{L}^2$ on $L^\bullet$ by 
\begin{equation*}
\mathbb{L}^2=\widetilde{\mathbb{L}}^2+\left(\begin{array}{cc} 0 & 0\\
                                                  0 & \mathbb{X}^2
                                                    \end{array}\right). 
\end{equation*}
Then 
\begin{equation*}\begin{split}
[\mathbb{L}^0,\mathbb{L}^2]= & [\mathbb{L}^0,\widetilde{\mathbb{L}}^2+\left(\begin{array}{cc} 0 & 0\\
                                                  0 & \mathbb{X}^2
                                                    \end{array}\right) ] \\
= & -D+[\mathbb{L}^0,\widetilde{\mathbb{L}}^2+\left(\begin{array}{cc} 0 & 0\\
                                                  0 & \mathbb{X}^2
                                                    \end{array}\right) ] \\
= & -\mathbb{L}^1\mathbb{L}^1
\end{split}
\end{equation*}
So 
\[
\mathbb{L}^0\mathbb{L}^2+\mathbb{L}^1\mathbb{L}^1+\mathbb{L}^2\mathbb{L}^0=0.
\]

We continue by setting 
\begin{equation*}
D=\mathbb{L}^1\mathbb{L}^2+\mathbb{L}^2\mathbb{L}^1+\left(\begin{array}{cc} 0 & 0\\
                                                    \mathbb{X}^3 e^0 & [\mathbb{X}^0,\mathbb{X}^3]
                                                    \end{array}\right). 
\end{equation*}
Then $D\colon L^\bullet\to L^\bullet\otimes_\A\A^3$ is $\A$-linear, $D|_{0\oplus X^\bullet}=0,$ and
\[
[\mathbb{L}^0,D]=0.
\]
Hence, by the same reasoning as above, there is $\widetilde{\mathbb{L}}^3\in \Hom_\A^\bullet(E^\bullet, L^\bullet\otimes_\A\A^3)$ such that $-D=[\mathbb{L}^0,\widetilde{\mathbb{L}}^3]$. Define 
\begin{equation*}
\mathbb{L}^3=\widetilde{\mathbb{L}}^3+\left(\begin{array}{cc} 0 & 0\\
                                                  0 & \mathbb{X}^3
                                                    \end{array}\right). 
\end{equation*}
Then one can compute that $\sum_{i=0}^3\mathbb{L}^i\mathbb{L}^{3-i}=0$.

Now suppose we have defined $\mathbb{L}^0,\ldots,\mathbb{L}^n$ satisfying for $k=0,1, \ldots, n,$
\[
\sum_{i=0}^k \mathbb{L}^i\mathbb{L}^{k-i}=0   \hspace{.5in}  \mbox{   for 	} k\ne 2, \\
\]
and 
\[
\sum_{i=0}^2 \mathbb{L}^i\mathbb{L}^{2-i}=0  \hspace{.5in}\mbox{   for   } k=2.
\]
Then define
\begin{equation*}
D=\sum_{i=1}^n \mathbb{L}^i\mathbb{L}^{n+1-i}+\left(\begin{array}{cc} 0 & 0\\
                                                    \mathbb{X}^{n+1} e^0 & [\mathbb{X}^0,\mathbb{X}^{n+1}]
                                                    \end{array}\right). 
\end{equation*}
$D|_{0\oplus X^\bullet}=0,$ and we may continue the inductive construction of $\mathbb{L}$ to finally arrive at a $\mathbb{Z}$-graded connection satisfying $\mathbb{L}\mathbb{L}=0$. The components of $\mathbb{L}$ construct both the $\mathbb{Z}$-graded connection on $E^\bullet$ as well as the morphism from $(E^\bullet,\mathbb{E})$ to $(X^\bullet,\mathbb{X})$. 

It follows from prop. \ref{HE} that $\RH((E^\bullet,\mathbb{E}))\stackrel{e}{\to} (F,f)$ is a quasi-isomorphism.
\end{proof}
\section{Examples and Extensions}\label{sec:5}
\subsection{Riemannian Fibration Example}\label{sec:5.1}
We start with the setup of section $10.1$ in \cite{MR2273508}.  Suppose $\pi\colon M \rightarrow B$ is a fiber bundle with compact fiber, with Riemannian metrics on the fibers.  Assume we have a connection on $M$  ---realized as a splitting,
\begin{equation*}
 TM = T_HM \oplus T(M/B),
\end{equation*}
where $T(M/B)$ is the bundle of vertical tangent vectors, and $T_HM$ a subbundle isomorphic to $\pi^*TB$.  Assume also that we have a connection $\nabla^{M/B}$ on $T(M/B)$. By $d_{M/B}$ we mean the vertical exterior differential on $T(M/B)$, by $P$ the projection
\begin{equation*}
 P\colon TM \rightarrow T(M/B),
\end{equation*}
and by $S$ the second fundamental form (in $T^*(M/B)\otimes T(M/B) \otimes T^*_HM$)
\begin{equation*}
 <S(X,\theta),Z> := <\nabla^{M/B}_Z X - P[Z,X],\theta>,
\end{equation*}
acting on $X$,$\theta$, and $Z$, sections of $T(M/B)$, $T^*(M/B)$, and $T_HM$ respectively.  By $\Omega$, we denote the curvature 
\begin{equation*}
 \Omega(X,Y) := -P[X,Y],
\end{equation*}
a section of $\Hom(\Lambda^2 T^*_HM,T(M/B))$.  

The vertical differential can be extended to $\Gamma(\Lambda T^*(M/B) \otimes \Lambda T^*_HM)$ by 
\begin{equation*}
 d_{M/B}(\beta \otimes \pi^*\nu) = d_{M/B}\beta \otimes \pi^*\nu. 
\end{equation*}
We can use the connection $\nabla^{M/B}$ to define a differential on the same space via
\begin{equation*}
 \delta_B(\beta \otimes \pi^*\nu) = (-1)^{\lvert \beta \rvert}\beta \otimes \pi^*d_B\nu + \sum_\alpha f^\alpha \wedge \nabla^{M/B}_{f_\alpha} \beta \otimes \pi^*\nu, 
\end{equation*}
for any frame $f_\alpha$ in $T(M/B)$ and $f^\alpha$ its dual frame.

Let $A_M^\bullet$ and $A_B^\bullet$ be the deRham differential graded algebras of $M$ and $B$ respectively.  We regard $A^\bullet_M$ as a right $\Z$-graded $A_B^0$-module, with $d_M$ a flat $\Z$-graded connection by virtue of the decomposition (\cite{MR2273508}, prop. 10.1) 
\begin{equation*}
 d_M = d_{M/B} + \delta_B - \sum_i \iota(e_i)<S,e^i> + \sum_i \iota(e_i)<\Omega,e^i>,
\end{equation*}
in which $\{e_i \}$ is a frame in $T(M/B)$ with $e^i$ its dual.  There is no point in describing the last two terms in detail beyond noting that they are elements of the type  
\begin{equation*}
A_{M}^i \otimes_{A_B^0} A_B^j \rightarrow A_M^{i-1}\otimes_{A_B^0} A_B^{j+2}.
\end{equation*}
So letting the latter two terms form $\E^2$, and the first two be $\E^0$ and $\E^1$ respectively, the decomposition of $\E := d_M$ is of the type we have working with already:
\begin{equation*}
 \E = \E^0 + \E^1 + \E^2.
\end{equation*}
 
Therefore, $d_M$ is a flat, $\Z$-graded connection with $\E^0, \E^1,$ and $\E^2$ non-trivial.  Plugging the Hopf fibration into this example might produce one of the simplest examples of a cohesive module with non-trivial higher connection components.

There is an issue that has been suppressed here: this example is infinite dimensional, and hence is only an example of a \emph{quasi-cohesive module} in the parlance of \cite{MR2648899}.  This can be corrected by virtue of some theorems from that article.  

First of all, one can note immediately that $A^\bullet_M$ defines a \emph{quasi-cohesive} $A_B^\bullet$-module (definition 3.2.2 in \cite{MR2648899}).  Furthermore, $d_{M/B}$ is $A^0_B$-nuclear, so therefore by theorem 3.2.7 in \cite{MR2648899}, $A^\bullet_M$ is a \emph{quasi-finite}, quasi-cohesive module.  And hence, the same theorem provides the existence of an actual cohesive module $E=(E^\bullet,\E)$ with the property that $E$ quasi-represents $\tilde{h}_{A_M^\bullet} := \Hom_{A_B^0}(-,A_M^\bullet)$.  In other words, there is a quasi-isomorphism in Mod-$\mathcal{P}_{\mathcal{A}}$ between $\tilde{h}_{A^\bullet_M}$ and $h_E$.  So $E$ is the cohesive module representing this fibration.  

\subsection{Coefficients in some $\mathcal{P}_{B}$}\label{sec:5.3}
As was already mentioned, one can define infinity-local systems valued in any dg-category.  Given such a target category the question arises of what can be said about the correspondence we have proved.  I.e., what dg-category sits on the other side?  Here is one interesting example.  

Suppose $\Sigma$ is some complex manifold, and $B := (\Omega^{0,\bullet}(\Sigma),\bar{\partial})$ its Dolbeaux complex.  In this case, $\mathcal{P}_B$ is a dg-enhancement of the derived category of sheaves with coherent cohomology on $\Sigma$, per \cite{MR2648899}.  Our Riemann--Hilbert correspondence extends to
\begin{equation*}
 \mathcal{P}_{B \hat{\otimes} A(X)} \cong \mathsf{Loc}^{\mathcal{P}_{B}}_{\infty}(X),
\end{equation*}
which is a statement which then concerns infinity-local systems valued in (an enhancement of) the derived category of $B$.

\begin{section}{Acknowledgements}
The authors extend thanks to Tobias Dyckerhoff, Pranav Pandit, Tony Pantev, and Jim Stasheff for helpful comments during the development this work, and especially Kiyoshi Igusa, who gratefully shared his ongoing work on integration of superconnections (arXiv:0912.0249v1).  And lastly the authors are grateful to Camilo Arias Abad and Florian Schaetz for pointing out a significant oversight in our first draft, and for sharing their work (arXiv:1011.4693v2), which is inspired by this one.
\end{section}


\noindent
Jonathan Block\\
University of Pennsylvania\\
David Rittenhouse Laboratory\\
209 S. 33rd St., Philadelphia, PA, 19104\\
Tel.: (215) 898-8178\\
blockj@math.upenn.edu\\

\noindent
Aaron M. Smith\\
University of Waterloo\\ 
Pure Mathematics\\
200 University Avenue West\\
Waterloo, Ontario, N2L 3G1\\ 
Canada\\
aaron.smith@uwaterloo.ca\\
aasmith@alumni.upenn.edu\\

\end{document}